\newtheorem{theoremABC}{Theorem}
\def\url#1{\expandafter\string\csname #1\endcsname}
\numberwithin{equation}{section}
\newtheorem{theorem}{Theorem}[section]
\newtheorem*{theorem*}{Theorem}
\newtheorem{corollary}[theorem]{Corollary}
\newtheorem{lemma}[theorem]{Lemma}
\newtheorem{proposition}[theorem]{Proposition}
\theoremstyle{definition}
\newtheorem{definition}[theorem]{Definition}
\newtheorem{remark}[theorem]{Remark}
\DeclareMathOperator{\Tr}{Tr}
\newcommand{\ep}{\epsilon}
\newcommand{\bbi}{{{\bf i}}}
\newcommand{\bbj}{{{\bf j}}}
\newcommand{\bbk}{{{\bf k}}}
\newcommand{\ZZ}{{\mathbb Z}}
\newcommand{\RR}{{\mathbb R}}
\newcommand{\CC}{{\mathbb C}}
\newcommand{\cL}{{\mathcal L}}
\newcommand{\cM}{{\mathcal M}}
\newcommand{\Rea}{{\operatorname{Re}}}
\newcommand{\RN}[1]{%
  \textup{\uppercase\expandafter{\romannumeral#1}}%
}
\DeclareMathOperator{\Hom}{Hom}
\DeclareMathOperator{\Span}{Span}
\DeclareMathOperator{\ad}{ad}
\title[Tangles, relative character varieties, and 
perturbed flat  moduli spaces] {Tangles, relative character varieties, and holonomy 
perturbed traceless flat  moduli spaces}
\author{Guillem Cazassus}
\address{Mathematical institute,  University of Oxford, Oxford
OX2 6GG}
\email{g.cazassus@gmail.com}
\author{Chris Herald}
\address{Department of Mathematics and Statistics, University of Nevada,  Reno, NV 89557} 
\email{herald@unr.edu}
 \author{Paul Kirk}
\address{Department of Mathematics, Indiana University, Bloomington, IN 47405} 
\email{pkirk@indiana.edu}
\thanks{CH was supported by Simons Collaboration Grants for Mathematicians, GC was funded by EPSRC grant reference EP/T012749/1.}
 \date{Jan.~30, 2021}                                           
\subjclass[2010]{Primary 57K18, 57K31, 57R58; Secondary 81T13} 
\keywords{ holonomy perturbation, Lagrangian immersion, Floer homology, flat moduli space, traceless character variety, quilted Floer homology}
\begin{document}

\begin{abstract} We  prove that the restriction map from the subspace of regular points of the holonomy perturbed SU(2) traceless flat moduli space of a tangle in a 3-manifold to the traceless flat moduli space of its boundary marked surface is a Lagrangian immersion. 
 A key ingredient in our proof is the use of composition in the Weinstein category, combined with the fact that SU(2) holonomy  perturbations in a cylinder induce Hamiltonian isotopies.
In addition, we show that $(S^2,4)$, the 2-sphere with four marked points, is its own traceless flat  SU(2) moduli space. 

\end{abstract}
\maketitle

\section{introduction}

We gather together some of the key symplectic properties of character varieties and traceless character varieties, as well as  variants which correspond to perturbed flat moduli spaces that arise in the gauge theoretic study of 3-manifolds.  Some of these results are well known to the experts, but the proofs in the literature are framed  in   contexts that include gauge theory, Hodge theory, and symplectic reduction.  In the present exposition, we provide   a  general proof of the fact that, roughly,  the character variety of a 3-manifold provides an immersed Lagrangian in the character variety of its boundary surface, for any compact Lie group $G$, whether the 3-manifold and its boundaries include tangles, or whether there are trace  or other conjugacy restrictions on some meridional generators, and furthermore we extend  the result to the holonomy perturbed situation.    Moreover, we clarify why different natural definitions of symplectic structures on the pillowcase, arising  as the character variety of the torus or the 2-sphere with four marked points, are equivalent. The  results are proved using only the   Poincar\'e-Lefschetz duality theorem, basic algebraic topology, and the notion of   composition in the Weinstein category.

\bigskip
 
 Let $(X,\cL)$ be a tangle in a compact, oriented 3-manifold $X$; that is, assume that $\cL$ is a properly embedded, compact 1-manifold.   For our initial discussion, we consider $G=SU(2)$.   
 Let $\pi$ denote some holonomy perturbation data supported in a finite disjoint union of solid tori in the interior of $X\setminus \cL$. 
 
 This data determines two moduli spaces, $$\cM_\pi(X,\cL),$$ the moduli space of   {\em $\pi$ holonomy-perturbed flat  $SU(2)$ connections on $X\setminus \cL$ with traceless holonomy on small meridians of $\cL$}, and the (well studied)  moduli space:
 $$\cM(\partial  X,\partial \cL )$$  of {\em flat $SU(2)$ connections on the punctured surface $\partial X\setminus \partial \cL $ with traceless holonomy 
around the marked points $\partial \cL$}.

Holonomy perturbed flat connections  on a 3-manifold are flat near the boundary, and  restriction to the boundary defines a map
\begin{equation}
\label{rest} r: \cM_\pi(X,\cL)\to\cM(\partial X, \partial \cL)
\end{equation}

The moduli space $\cM(\partial  X,\partial \cL )$ is the cartesian product of the moduli spaces of its path components. The flat $SU(2)$ moduli space of an oriented connected surface of genus $g$ with $k$ marked points is known to be a singular variety  with smooth top stratum carrying a symplectic form called the {\em Atiyah-Bott-Goldman form} \cite{Atiyah_Bott,Goldman}.  Thus   the smooth top stratum of the cartesian product $\cM(\partial  X,\partial \cL )$, denoted $\cM(\partial  X,\partial \cL )^*$,   is endowed with the product symplectic form.

 \medskip

The main result of this article is the following theorem (Theorem \ref{thm1A} below), concerning the regular points (see Section \ref{regpts})    of the perturbed moduli space.  

\begin{theoremABC}  \label{thm1}  
Suppose $A\in \cM_\pi(X,\cL)$ is a regular point.
Then $A$ has a neighborhood $U$ so that   the restriction
$r|_U:U\subset \cM_\pi(X,\cL)\to \cM(\partial X, \partial \cL)^*$ is a Lagrangian immersion.
 
\end{theoremABC}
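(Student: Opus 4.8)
The plan is to identify the differential of $r$ with a restriction map in twisted cohomology and then verify the three defining properties of a Lagrangian immersion---immersivity, isotropy of the image, and half-dimensionality---separately. First I would fix the regular point $A$ and use the hypothesis (from the definition of regular points in Section \ref{regpts}) that $\cM_\pi(X,\cL)$ is smooth near $A$, with $T_A\cM_\pi(X,\cL)\cong H^1_\pi(X\setminus\cL;\ad A)$, the first cohomology of the perturbed deformation complex carrying the traceless constraints along the meridians of $\cL$. Likewise $T_{r(A)}\cM(\partial X,\partial\cL)^*\cong H^1(\partial X\setminus\partial\cL;\ad A)$, the parabolic cohomology of the marked boundary surface, on which the Atiyah--Bott--Goldman form is $\omega(a,b)=\langle a\cup b,[\partial X]\rangle$ with $\langle\,,\,\rangle$ induced by the Killing form on $\ad A$. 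Under these identifications $dr$ is the restriction-to-boundary map $\iota^*$, so the statement reduces to properties of $\iota^*$.

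Next I would treat the unperturbed model $\pi=0$, where the argument is purely topological. Isotropy of $\operatorname{im}(\iota^*)$ is the cup-product/Stokes computation: for $a=\iota^*\alpha$, $b=\iota^*\beta$, naturality of cup products gives $a\cup b=\iota^*(\alpha\cup\beta)$, hence $\omega(a,b)=\langle\alpha\cup\beta,\iota_*[\partial X]\rangle=0$ because the marked boundary surface bounds in the tangle complement once the meridional (parabolic) contributions are accounted for. Half-dimensionality is the ``half lives, half dies'' principle: applying Poincar\'e--Lefschetz duality to the pair $(X\setminus\cL,\partial X\setminus\partial\cL)$ with the self-dual coefficients $\ad A$ shows that $\dim\operatorname{im}(\iota^*)=\tfrac12\dim H^1(\partial X\setminus\partial\cL;\ad A)$, so the isotropic image is in fact Lagrangian. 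Immersivity of $r$---that is, injectivity of $\iota^*=dr$---then follows because regularity forces the kernel of $\iota^*$, which by the long exact sequence of the pair equals the image of $H^1(X\setminus\cL,\partial X\setminus\partial\cL)\to H^1(X\setminus\cL)$, to vanish; combined with the half-dimension count this makes $r|_U$ a Lagrangian immersion when $\pi=0$.

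Finally I would reduce the perturbed case to the unperturbed one. The perturbation data $\pi$ is supported in interior solid tori, so perturbed-flat connections are genuinely flat on a collar of $\partial X$ and the target $\cM(\partial X,\partial\cL)^*$ is unchanged; the isotropy computation above is insensitive to $\pi$ because it only reads the boundary. For the remaining statements I would invoke the mechanism highlighted in the abstract: a holonomy perturbation supported in a cylinder induces a Hamiltonian isotopy of the boundary moduli space, so that, via composition in the Weinstein category, the Lagrangian swept out by the perturbed restriction is the composite of the unperturbed one with the graph of a Hamiltonian diffeomorphism. Since composing an immersed Lagrangian with a symplectomorphism preserves the Lagrangian-immersion property, the case $\pi\neq0$ follows from $\pi=0$. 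I expect the main obstacle to be exactly this last step---making precise that an interior holonomy perturbation descends to a Hamiltonian isotopy on the boundary and composes correctly in the Weinstein category---together with the careful bookkeeping of the parabolic/traceless conditions at the marked points needed to make the Poincar\'e--Lefschetz duality yield \emph{precisely} half-dimension; the classical cup-product isotropy argument is by comparison routine.
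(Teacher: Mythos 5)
Your flat case is essentially the paper's argument (cup--product isotropy plus Poincar\'e--Lefschetz duality for half-dimension), though you leave the traceless/meridional bookkeeping as an acknowledged obstacle; the paper resolves exactly that point by symplectic reduction: $\ker\bigl(H^1(S;\frak{g})\to H^1(V;\frak{g})\bigr)$ is coisotropic with reduction $\widehat H^1(F;\frak{g})$ (Proposition \ref{prop1.2}), and the Lagrangian $L_Y$ of the closed boundary surface reduces to the Lagrangian $L_{Y,V}$ in the reduced space (Corollaries \ref{reduction} and \ref{thepoint}). That step is genuinely needed, not routine, but your outline could be completed along those lines.

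The genuine gap is in your treatment of the perturbed case. You claim that an interior holonomy perturbation makes the perturbed restriction equal to the unperturbed Lagrangian composed with the graph of a Hamiltonian diffeomorphism of the \emph{boundary} moduli space $\cM(\partial X,\partial\cL)^*$, so that $\pi\neq 0$ follows from $\pi=0$ by applying a symplectomorphism. This is false for general perturbation data: it would force $\cM_\pi(X,\cL)$ to be diffeomorphic to $\cM(X,\cL)$, whereas the paper emphasizes that perturbations typically \emph{change} the topology of the moduli space --- that is their entire purpose. The Hamiltonian-isotopy statement you quote from the abstract holds only when the perturbation curve is parallel to a curve in the boundary surface (the Goldman twist flow discussion of Section \ref{ssec:depend_pert}); for an arbitrary framed curve in the interior there is no such isotopy of the boundary moduli space. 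The paper's actual mechanism is different: decompose $Y=Y_0\cup_T Y_1$ where $Y_0$ is the union of perturbation solid tori and $T$ the separating tori, so that $\chi_\pi(Y,C)$ is by definition the Weinstein composition of $\alpha_\pi:\chi(Y_0)=[0,\pi]\to\chi(T)$ with $\beta:\chi(Y_1,C)\to\chi(T)\times\chi(F,\partial F)$ (Corollary \ref{handlebody} and Equation (\ref{pertCV})). The flat theorem applies to $Y_1$, whose boundary contains $T\sqcup S$, making $\beta$ a Lagrangian immersion near regular points; the perturbation enters only by deforming the arc $\alpha$ inside the pillowcase $\chi(T)$ of each \emph{perturbation torus} --- this is where ``perturbations induce Hamiltonian isotopies'' is actually used --- so $\alpha_\pi$ remains a Lagrangian immersion; and then Lemma \ref{wkl} (Weinstein composition, with regularity supplying the transversality to the diagonal) yields that the composite $\chi_\pi(Y,C)\to\chi(F,\partial F)$ is a Lagrangian immersion. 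Note that the fiber product in this composition can and does change the topology of the domain, which is exactly what your reduction-to-$\pi=0$ argument cannot accommodate.
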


This is not a surprising result; indeed,  many special cases are known, for example, when $\cL$ is empty this result is proven  in \cite{Herald}.  
 Our primary aim
 is to provide details of the assertion that well-known arguments
 in the flat  case   extend to the holonomy-perturbed flat case when $\cL$ is  nonempty,  in support of one claim of the main result of the article \cite{CHKK}.   In that article it is shown that a certain process introduced by Kronheimer-Mrowka to ensure admissibility of bundles in instanton homology \cite{KM}  manifests itself on the  symplectic side of the Atiyah-Floer conjecture  \cite{MR974342}  (i.e., Lagrangian Floer theory of character varieties or flat moduli spaces)  as  a certain Lagrangian immersion of a smooth closed genus 3 surface into the smooth stratum $\mathbb{P}^*\times \mathbb{P}^*$ of
 the product of two pillowcases  (cf. Equation (\ref{pillowcaseeq})). What is proved in \cite{CHKK} is that this genus 3 surface satisfies the hypotheses of Theorem \ref{thm1} at every point.

 Since it causes no extra   work, we take the opportunity to provide	 an elementary algebraic topology proof of Theorem \ref{thm1} {\em in the flat case}, for any compact Lie group $G$. The statement can be found in Corollary \ref{flatthmtangle}.   We emphasize that the flat case of Theorem \ref{thm1}, when $\cL$ is nonempty, is known (e.g., to those who attended the appropriate Oxford seminars in the 1980s) and indeed discussed on pages 15-16 of Atiyah's monograph \cite{MR1078014}.

\medskip

Having stated Theorem \ref{thm1} and described its relation to the gauge theoretic literature using the language of flat and perturbed flat moduli spaces, we note that these spaces can also be identified with certain character varieties, the definitions of which do not require any of the analytical machinery of gauge theory;  the proofs in this article are most simply explained without it, so we shall henceforth revert to the character variety terminology in our exposition.  In the simplest situation of a connected 2- or 3-manifold $M$ with base point $x_0$, for a fixed compact Lie group $G$, each flat $G$ connection determines a holonomy representation from $\pi_1(M,x_0)$ to $G$, and this correspondence induces a bijection   between the  moduli space of flat connections  and the set of $G$ representations of $\pi_1$ modulo conjugation, known as the {\em character variety}.   We describe various extensions of the notion of the character variety corresponding to traceless and perturbed flat moduli spaces below.

  We also  take this opportunity to provide an elementary exposition of   holonomy perturbations   from the perspective of fundamental groups and character varieties, in the language of composition of Lagrangian immersions.
  Algebraic topology arguments simplify the task of  explaining how to understand the extension from the flat  to the holonomy perturbed flat situation algebraically.     Other arguments, which instead appeal to Hodge and elliptic theory of  perturbed  flat bundles over  3-manifolds with boundary, can be made when $\cL$ is empty, and can be found in detail in \cite{Taubes, Herald}.   But when $\cL$ is nonempty, proper treatment of the traceless condition requires more  subtle  analytic tools.

 Our focus on holonomy perturbations  is motivated by the fact that they are  compatible with the  analytical framework of  the instanton gauge theory side  of the Atiyah-Floer conjecture \cite{Taubes,Flo}. Holonomy perturbations modify the flatness condition (i.e. the non-linear PDE {\em Curvature$=0$}) in a specific way on a collection of solid tori,  as described  in Lemma 8.1 of Taubes \cite{Taubes} (see also Lemma 16 of \cite{Herald}).  We translate this result into the language of character varieties in Section \ref{su2ahp}.

\medskip

In addition to proving Theorem \ref{thm1}, we prove Theorem \ref{PC=PC2}, whose statement roughly says {\em the four punctured 2-sphere is its own traceless $SU(2)$ moduli space}.  This is a variant, for $(S^2,4)$, of the 
ubiquitous mathematical statement that a torus is isomorphic to its Jacobian.
We now set some notation in preparation for the formal statement.
First, denote $U(1)\times U(1)$ by $\mathbb{T}$.
The group $SL(2,\ZZ)$ acts on $\mathbb{T}$ via 
\begin{equation}\label{SL2Z}\begin{pmatrix} p&r\\q&s\end{pmatrix}
\cdot (e^{x\bbi},e^{y\bbi})=(e^{(px+ry)\bbi},e^{(qx+sy)\bbi}).\end{equation}
The 2-form $dx\wedge dy$ defines the standard symplectic structure on $\mathbb{T}$ and is invariant under  the $SL(2,\ZZ)$ action.  
Note that the action of the central element $-1\in SL(2,\ZZ)$  on $\mathbb{T}$ defines the {\em elliptic involution}, denoted by
 $ \iota(e^{\bbi x}, e^{\bbi y})=(e^{-\bbi x}, e^{-\bbi y})$;  this involution has    four fixed points $(\pm 1, \pm1)$. Denote by $\mathbb{T}^*\subset \mathbb{T}$ the complement of the four fixed points, on which $\iota$ acts freely.    Then set 
\begin{equation}\label{pillowcaseeq}\mathbb{P}=\mathbb{T}/\iota \text{ and }\mathbb{P}^*=\mathbb{T}^*/\iota.\end{equation}
The quotient $PSL(2,\ZZ)=SL(2,\ZZ)/\{\pm1\}$ acts on the smooth locus  $\mathbb{P}^*$, a 4-punctured 2-sphere, preserving the 
symplectic form $dx\wedge dy$.

\medskip

  \begin{wrapfigure}[14]{r}{0.37\textwidth}
 \vspace{-.9cm}
\labellist 
\pinlabel $a$ at 150 238
\pinlabel $b$ at 200 120
\pinlabel $c$ at 140 110
\pinlabel $d$ at 60 115
\pinlabel $H_2$ at 130 23
\pinlabel $H_1$ at 105 105
\endlabellist
\centering
\includegraphics[width=0.3\textwidth]{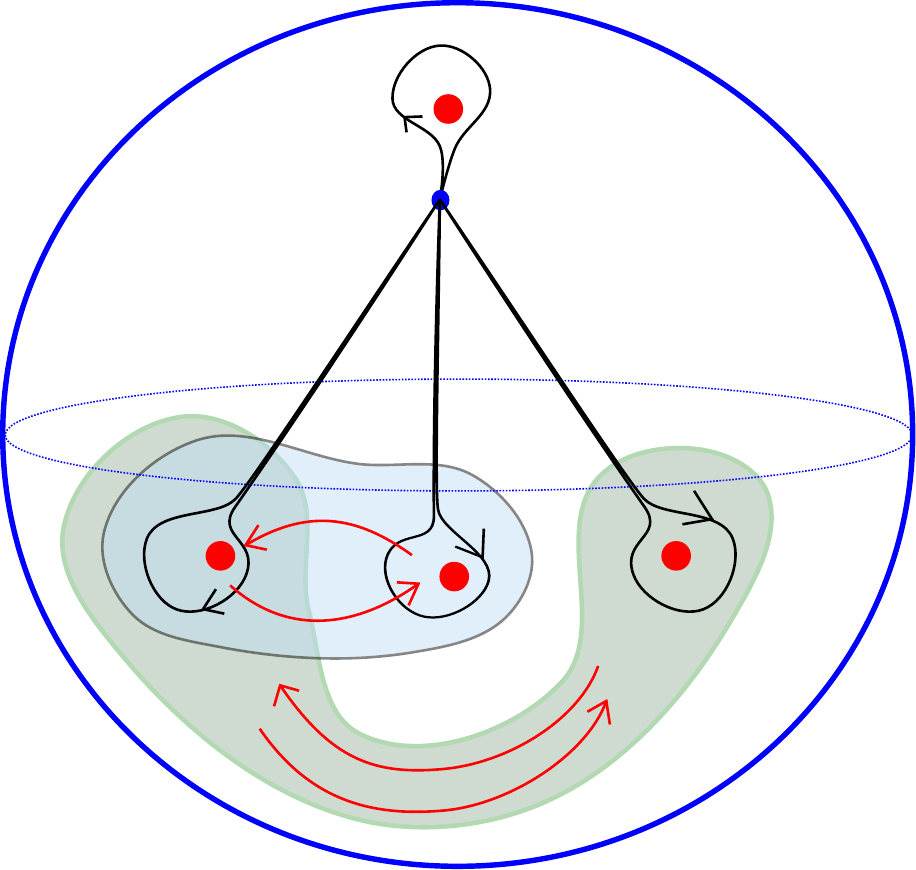}\caption{}
\label{halffig}
\end{wrapfigure}

Next,  we consider the relative character variety (defined below) of the   2-sphere with four marked points $\chi_{SU(2),J}(S^2,4)$, where the $J$ subscript indicates the four meridians are sent to  the conjugacy class $J$ of traceless elements. This character variety is equipped with its relative Atiyah-Bott-Goldman 2-form $\omega_{(S^2,4)}$ (defined below).

 From  the presentation 
$\pi_1(S^2-4)=\langle a,b,c,d~|~ abcd=1\rangle,$ we see that this group is freely generated by $a,b,c$.  Hence a representation is uniquely defined by where it sends the generators $a,b,c$.
Define the function (identifying $SU(2)$ with the group of unit quaternions; see Section \ref{LGG}):
$$\hat\rho:\mathbb{P}\to \chi_{SU(2),J}(S^2,4),~
[e^{\bbi x}, e^{\bbi y}]\mapsto \big[a\mapsto\bbj, b\mapsto e^{\bbi x}\bbj, c\mapsto  e^{\bbi y}\bbj\big]. $$

 \begin{theoremABC}\label{PC=PC2}  Half Dehn twists in the two twice-punctured  disks 
 indicated in Figure \ref{halffig}  generate a $PSL(2,\ZZ)$ action on $\chi(S^2,4)$ which preserves the Atiyah-Bott-Goldman symplectic form $\widehat \omega_{(S^2,4)}$.
For some non-zero constant $c$, the map $$ \hat\rho:(\mathbb{P}^*,c~dx\wedge dy)\to (\chi(S^2,4)^*,\widehat \omega_{(S^2,4)})$$  
 is a $PSL(2,\ZZ)$ equivariant symplectomorphism.  
 \end{theoremABC}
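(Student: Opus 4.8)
The plan is to prove the two assertions in sequence: first that $\hat\rho$ is a diffeomorphism from the smooth locus $\mathbb{P}^*$ onto $\chi(S^2,4)^*$, so that pullbacks of forms make sense, then that it intertwines the two group actions and carries one area form to a constant multiple of the other.

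First I would verify that $\hat\rho$ is a well-defined homeomorphism $\mathbb{P}\to\chi(S^2,4)$ restricting to a diffeomorphism on smooth loci. Well-definedness on $\mathbb{P}=\mathbb{T}/\iota$ holds because conjugation by $\bbj$ sends the representation attached to $(e^{\bbi x},e^{\bbi y})$ to the one attached to $(e^{-\bbi x},e^{-\bbi y})$: since $\bbj e^{\bbi x}\bbj^{-1}=e^{-\bbi x}$, conjugating the triple $(\bbj,\,e^{\bbi x}\bbj,\,e^{\bbi y}\bbj)$ by $\bbj$ realizes $\iota$. For bijectivity I would use the standard normalization of a traceless representation: identifying traceless elements of $SU(2)$ with the unit sphere of imaginary quaternions, conjugation acts as $SO(3)$, so after conjugating we may assume $a=\bbj$ and, using the residual circle of rotations about the $\bbj$-axis, that $b=e^{\bbi x}\bbj$ lies in the $\bbj\bbk$-plane. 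The remaining constraint is that $d=(abc)^{-1}$ be traceless; a short quaternion computation gives $\operatorname{Re}(cba)=c_1\sin x$, where $c_1$ is the $\bbi$-component of $c$, so for $\sin x\neq0$ the condition forces $c_1=0$, i.e. $c=e^{\bbi y}\bbj$. Thus $(x,y)$ are genuine coordinates on the generic stratum and $\hat\rho$ is onto; the loci $\sin x=0$ correspond to the four corners of the pillowcase and the singular points of $\chi$, completing the identification $\mathbb{P}^*\cong\chi(S^2,4)^*$.

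Next I would treat the $PSL(2,\ZZ)$-equivariance, which is the heart of the argument and simultaneously proves assertion (a). The relative Atiyah--Bott--Goldman form $\widehat\omega_{(S^2,4)}$ is built intrinsically from the topology of $(S^2,4)$ together with the invariant form on $\mathfrak{su}(2)$ and the conjugacy-class data $J$; consequently any self-diffeomorphism of $(S^2,4)$ preserving the marked points and $J$ induces a symplectomorphism of $\chi(S^2,4)^*$. A half Dehn twist $H_i$ in a twice-punctured disk is such a diffeomorphism, so each $H_i$ preserves $\widehat\omega_{(S^2,4)}$. To identify the action I would compute the braid/band automorphism that $H_i$ induces on $\pi_1(S^2-4)=\langle a,b,c,d\mid abcd=1\rangle$, apply it to the representation $\hat\rho(x,y)$, renormalize back to the slice $a=\bbj$ with $b$ in the $\bbj\bbk$-plane, and read off the resulting transformation of $(x,y)$. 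I expect each such computation to yield a \emph{linear} map of $(x,y)$ agreeing with the action (\ref{SL2Z}) of an elementary generator of $SL(2,\ZZ)$, with the two twists $H_1,H_2$ corresponding to the two standard generating Dehn twists of the torus under the branched double cover $\mathbb{T}\to\mathbb{T}/\iota=\mathbb{P}$; since those generate $SL(2,\ZZ)$ and $-1$ acts as $\iota$, the half twists generate the asserted $PSL(2,\ZZ)$ action and $\hat\rho$ is equivariant.

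With equivariance in hand the symplectomorphism statement follows by a rigidity argument. Both $dx\wedge dy$ and $\hat\rho^*\widehat\omega_{(S^2,4)}$ are $PSL(2,\ZZ)$-invariant nonvanishing $2$-forms on the surface $\mathbb{P}^*$ (the first by the invariance noted after (\ref{SL2Z}), the second because $\hat\rho$ is equivariant and $\widehat\omega_{(S^2,4)}$ is preserved by the half twists). Their ratio is therefore a positive $PSL(2,\ZZ)$-invariant smooth function $f$ on $\mathbb{P}^*$; pulling back to $\mathbb{T}^*$ gives a continuous function invariant under the linear $SL(2,\ZZ)$-action on the torus. Since a single hyperbolic element of $SL(2,\ZZ)$ already acts ergodically, $f$ is almost everywhere constant, hence constant by continuity, giving $\hat\rho^*\widehat\omega_{(S^2,4)}=c\,dx\wedge dy$ for some $c$; nondegeneracy of $\widehat\omega_{(S^2,4)}$, together with $\hat\rho$ being a local diffeomorphism, forces $c\neq0$. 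As an alternative to the ergodicity step one can compute $\hat\rho^*\widehat\omega_{(S^2,4)}$ directly from Goldman's cup-product formula, representing $\partial\hat\rho/\partial x$ and $\partial\hat\rho/\partial y$ by group cocycles and pairing them via the invariant form; this avoids dynamics but is more laborious. The main obstacle I anticipate is the middle step: getting the braid automorphisms and the normalization conventions exactly right so that the induced maps on $(x,y)$ come out linear and match the intended $SL(2,\ZZ)$ generators, and checking that the relative ABG form is genuinely preserved by the half twists, which requires that the boundary/conjugacy-class conditions defining the relative form are respected by the diffeomorphism. These steps are bookkeeping-heavy but conceptually routine once the slice $a=\bbj$, $b$ in the $\bbj\bbk$-plane, is fixed.
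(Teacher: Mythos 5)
Your proposal is correct in outline, but it reaches the crucial ``constant multiple'' statement by a genuinely different mechanism than the paper. The paper proves the symplectomorphism claim \emph{first}, by a direct infinitesimal computation that never uses the group action: the tangent vectors $\partial_x,\partial_y$ correspond to the constant cocycles $z_x=(a\mapsto 0,b\mapsto-\bbi,c\mapsto 0)$, $z_y=(a\mapsto 0,b\mapsto 0,c\mapsto-\bbi)$; the coefficient system splits $\ad$-invariantly as $\RR\bbi\oplus\CC\bbj$, the classes $[z_x],[z_y]$ span the $\RR\bbi$-summand of $\widehat H^1$ (so the $\CC\bbj$-summand vanishes), and on the $\RR\bbi$-summand the representation is the \emph{fixed} sign representation ($a,b,c,d$ all act by $-1$), independent of $(x,y)$. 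Hence $\hat m^*\widehat\omega_{(S^2,4)}(\partial_x,\partial_y)=\widehat\omega^1_{S^2,4}(z_x,z_y)$ is literally a constant, nonzero by nondegeneracy --- this is exactly the ``laborious'' cup-product alternative you mention, except that the splitting makes it nearly computation-free. Only afterwards does the paper do the braid computation ($H_1^*(m(x,y))=m(x,y-x)$, $H_2^*(m(x,y))=m(x+y,y)$, giving the two elementary matrices generating $PSL(2,\ZZ)$). Your route inverts this: you prove equivariance first (by the same automorphism computation, which you correctly identify but leave as an expectation) and then extract constancy of the ratio $f=\hat\rho^*\widehat\omega_{(S^2,4)}/(dx\wedge dy)$ from ergodicity of a hyperbolic element of $SL(2,\ZZ)$ on $\TT$. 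That argument does work (hyperbolic automorphisms permute the $2$-torsion points, so they preserve $\TT^*$, and an a.e.-constant function continuous on the connected dense open set $\TT^*$ is constant), and it buys you the constant without evaluating any pairing; what it costs is that the two halves of the theorem become logically interdependent, and it imports a dynamical fact where the paper stays within elementary cohomology. Two small repairs you should make: (i) your identification of $\mathbb{P}\cong\chi(S^2,4)$ is slightly off at the boundary of the normal form --- the locus $\sin x=0$ is two circles in $\mathbb{P}$, not just the four corners, and its generic points map to \emph{irreducible} (smooth) classes, so injectivity there needs the residual rotation about the $\bbj$-axis rather than the $\Rea(cba)=c_1\sin x$ argument (the paper sidesteps all of this by citing Lin and Heusener--Kroll); (ii) in the equivariance step, note that no renormalization back to the slice is actually needed, since both half twists fix $a$ and keep $b,c$ of the form $e^{u\bbi}\bbj$, which is what makes the induced maps on $(x,y)$ visibly linear.
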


The proof, as well as  as well as an exposition of the simpler case of $SU(2)$  character variety of the torus,  is contained in Section \ref{pillowcase}.

\medskip

The outline of the proof of the  flat case of  Theorem \ref{thm1} is the following.   Holonomy identifies flat moduli spaces with character varieties.  When $\cL$ is empty, Theorem \ref{thm1} follows from Weil's identification of the Zariski tangent spaces of character varieties with cohomology and Poincar\'e-Lefschetz duality.   Symplectic reduction  is used to  extend to the case when $\cL$ is nonempty.
We use only Poincar\'e duality with local coefficients, which is briefly reviewed, to highlight the fact  that the proof that the image of the differential is {\em maximal} isotropic (the subtlest part of any proof) does not depend on  the deeper result that the nondegenerate 2-form is closed, i.e., symplectic.   \medskip

To put Theorem \ref{thm1} in context, 
notice   that the Lagrangian immersion $r_U$   depends on the perturbation data $\pi$.  Building on ideas from \cite{MR974342,Flo,Weinstein} and many others, Wehrheim and Woodward developed quilts and Floer field theory \cite{FF_tangles,FF_coprime}; a framework that aims to produce a $2+1$ or, more generally, $(2,0)+1$ TQFT which factors as the composition of the (perturbed) flat moduli space functor, followed by passing to the Lagrangian Floer homology of the flat moduli spaces of surfaces equipped with the Atiyah-Bott form.  
  This can be considered as an approach to realizing a {\em bordered} Lagrangian Floer theory of character varieties of surfaces and 3-manifolds, as was done for Heegaard-Floer theory in \cite{LOT-main}.

 Even in the lucky case that one finds perturbation data $\pi$ for which  $\cM_\pi(X,\cL)$ is smooth of the correct dimension and  Lagrangian immerses into  the smooth stratum $\cM(\partial(X,\cL))^*$,   an understanding of how this immersion  depends on  $\pi$ is necessary  in order to extract topological information. 
If each perturbation curve is  parallel to an embedded curve in the boundary surface, for example, then varying the perturbation parameters changes $r$ by a Hamiltonian isotopy, and in particular has no effect on the topology of $\cM_\pi(X,\cL)$.   This is discussed in Section \ref{su2ahp}; see also \cite{HK}.

 However, for general choices of perturbation data,  perturbations typically change the topology of $\cM_\pi(X,\cL)$; indeed, the primary purpose of using perturbation curves is precisely to smooth $\cM_\pi(X,\cL)$.  As we discuss in Section~\ref{ssec:depend_pert},   Floer field theory posits that, nevertheless, the resulting immersions  should    be independent of $\pi$ in a Floer-theoretic sense (i.e., isomorphic in some Fukaya category).

\subsection{Acknowledgements} The authors thank  C. Judge, E. Toffoli,  and L. Jeffrey 
for helpful conversations. Special thanks to A. Kotelskiy, who read early versions of this article and made important suggestions.
\section{Review of Poincar\'e duality and symplectic forms}

\subsection{Preliminaries and Notation}  
\subsubsection{Symplectic linear algebra}  Let $A$ denote a finite-dimensional $\RR$ vector space.  A skew symmetric bilinear form $\omega:A\times A\to \RR$ is called a {\em symplectic form} if it is nondegenerate, that is, the map $A\to \Hom(A,\RR)$ given by $a\mapsto \omega(a,-)$ is an isomorphism. 
If $A$ admits a symplectic form, then its dimension is even;  denote the dimension by  $2n$. 

A {\em coisotropic subspace} $C\subset A$ is a subspace satisfying
$${\rm Annihilator}(C):=\{a\in A~|~ \omega(c,a)=0 \text{ for all } c\in C\}\subset C.$$
A {\em Lagrangian subspace} is a coisotropic subspace of dimension $n$, or, equivalently a subspace $L\subset A$ 
satisfying
 ${\rm Annihilator}(L)=L.$ 

{\em Symplectic reduction} refers the following process. Given any  coisotropic subspace $C\subset A$, the restriction
$$\omega|_C:C\times C\to \RR$$
may be degenerate, but $\omega|_C$ descends to a symplectic form $\widehat \omega$
on $\widehat{C}:=C/{\rm Annihilator}(C)$.  Furthermore if $L\subset A$ is any Lagrangian subspace of $(A,\omega)$ , then  
$$\hat L:=\big(L \cap C\big)/\big(L\cap{\rm Annihilator}(C) 
\big)\subset \widehat C$$ is  a Lagrangian subspace  of $(\widehat C,\widehat \omega)$.  The subquotient $(\widehat C,\widehat \omega)$ is called the {\em symplectic reduction of $(A,\omega)$ with respect to $C$}  and the Lagrangian subspace $\hat L\subset\widehat C$ the {\em symplectic reduction of $L$ with respect to $C$}.

 \subsubsection{The Lie group $G$}\label{LGG} Let $G$ be  connected compact Lie group.  Its  Lie algebra  $\frak{g}$ admits   a   positive definite symmetric $\ad$-invariant bilinear form, so we fix one and denote it by   $\{ \ , \ \}:\frak{g}\times \frak{g}\to \RR.$ Fix a conjugacy class $J\subset G$.    It is well known  that $G$ embeds in some $\RR^N$ as an algebraic variety.  Thus, if $F_g$ denotes  the free group on $g$ generators, $\Hom(F_g,G)=G^g$   is an affine real-set variety, with tangent space $\frak{g}^g$  at the trivial homomorphism. 
 
 \medskip

Our main focus, and the context for Theorem \ref{thm1}, concerns the case when $G=SU(2)$, $\{v,w\}=-\tfrac 1 2 {\rm Tr}(v w)$,  and 
$J\subset SU(2)$  is the conjugacy class of traceless matrices.  
To keep notation compact, we identify $SU(2)$ with the group of unit quaternions $\{a+b\bbi+c\bbj +d\bbk~|~a^2+b^2+c^2+d^2=1\}$ and
its Lie algebra $su(2)$ with the purely imaginary quaternions $\{ b\bbi + c\bbj+d\bbk\}$.   With this identification, $\Rea:SU(2)\to[-1,1]$ corresponds to $\frac 1 2 \Tr$.

\medskip

\subsubsection{Notation used for 2- and 3-manifolds} 
Throughout this article,   the notation 
$$Y,S,C,V, \text{ and } F$$ is fixed as follows.

First, $S$  denotes a possibly disconnected compact oriented surface without boundary.  Denote by $S_+$ the path connected based space obtained by first adding a disjoint base point to $S$, then attaching a 1-cell from this new base point to  each path component of $S$. 

\medskip

Next, $C$ denotes a finite disjoint union of $m$ circles in the surface $S$.  Number its components $C_i, i=1,\dots m$.  We assume that either each $C_i$ is oriented, or  
 the chosen conjugacy class $J\subset G$ is invariant under inversion in $G$, so that the condition that   a homomorphism from $\pi_1(S_+)\to G$ takes each loop $C_i$ into $J$ makes sense.  

\medskip

The pair $(S,C)$ determines a decomposition of $S$ into two surfaces as follows.

\medskip

Denote by $V$ a tubular neighborhood of $C$. Then $V\subset S$ is a disjoint union of  oriented annuli, one around each $C_i$.  
\medskip

Denote by $F$ the complementary surface, determining the decomposition:
\begin{equation}\label{deco1}S=F\cup_{ \partial F=\partial V}V.\end{equation}
 Orient $V$ and $F$ as subsurfaces of $S$. 
 
 \medskip
  
Finally, $Y$   denotes a compact, connected, and oriented 3-manifold with boundary $\partial Y=S$; that is, we assume that an orientation preserving identification of $\partial Y$ with $S$ is given.  Fix a base point in the interior of $Y$, and extend the inclusion $S=\partial Y\subset Y$ to a based embedding of $S_+$ into $Y$. 
 
\subsubsection{Tangles}\label{tanglenot}  A {tangle} $(X,\cL)$ consists of a connected compact oriented 3-manifold $X$, and $\cL\subset X$ a properly embedded compact 1-submanifold with boundary.   Thus $\cL$ consists of a disjoint union of $n$ intervals and $k$ interior circles.

A tangle $(X,\cL)$ gives rise to a triple $(Y,S,C)$ as above by taking 
$$Y=X\setminus N(\cL),$$
Where $N(\cL)$ denotes a tubular neighborhood of $\cL$, and letting $S=\partial Y$.  Then let $C\subset S$ denote a union of $m=n+k$ meridians of $\cL$, one for each component, viewed as a curve in $S$.   As before, $Y,S,C$ determine $V$ and $F$. Orientations of the tangle components are equivalent to orientations of the components of $C$.
 
 The process $(X,\cL)\Rightarrow (Y,S,C)$ is nearly reversible, by attaching 2-handles to $Y$ along $C$ and setting $\cL$ to be union of the co-cores of the 2-handles. The resulting tangle is obtained from $(X,\cL)$ by removing $k$ disjoint small balls from the interior of $X$,  each meeting a different closed component of $\cL$ in a trivial arc.
 This  results in a tangle with no closed components, but which has  the same character variety as the starting tangle.  We use the notation $(Y,S,C)$ in our arguments as it leads to simpler expressions, but state consequences in terms of the tangle $(X,\cL)$, as they are clearly seen as morphisms in a (more familiar) $(2,0)+1$ cobordism category.

\subsection{Poincar\'e duality and intersection forms}\label{pdif}

We begin by recalling the statement of Poincar\'e-Lefschetz duality with local coefficients for an oriented compact connected $n$-manifold $M$ with boundary $\partial M$ equipped with a finite cell decomposition (see  \cite{CLM} for a careful exposition and proofs, and \cite{MR141115} for an elementary derivation using dual regular cell decompositions). Denote by $\xi\in C_n(M,\partial M;\ZZ)$ a cellular chain representing the fundamental class.

Fix a base point in $M$ and some homomorphism $\pi_1M\to \Gamma$ to some group $\Gamma$, and denote by $M_\Gamma\to M$ the corresponding $\Gamma$ cover, equipped with the lifted cell structure and its cellular left $\Gamma$ covering action.   The cellular $\ZZ$ chain complexes of $M_\Gamma$ and $(M_\Gamma, \partial M_\Gamma)$ are denoted  by $C_*^\Gamma(M)$  and $C^\Gamma_*(M,\partial M)$. These are free finitely generated left $\ZZ\Gamma$ modules: a choice of $\ZZ\Gamma$ basis is given by arbitrarily choosing lifts of cells of $M$ (respectively of cells which meet $ M\setminus\partial M$).   A choice of cellular approximation of the diagonal determines a chain level cap product:
\begin{equation}\label{rawPD}
\cap \xi:\Hom_{\ZZ\Gamma}(C_*^\Gamma(M,\partial M),\ZZ\Gamma)\to C^\Gamma_*(M).
\end{equation}
The Poincar\'e duality theorem asserts that this map is a chain homotopy equivalence of  free $\ZZ\Gamma$  chain complexes.

 Given any left $\ZZ\Gamma$ module $W$, $\Hom_{\ZZ\Gamma}(C_*^\Gamma(M,\partial M),W)  =\Hom_{\ZZ\Gamma}(C_*^\Gamma(M,\partial M),\ZZ\Gamma)\otimes_{\ZZ\Gamma} W$ and hence  capping with $\xi$ induces a chain homotopy equivalence 
$$\cap \xi:\Hom_{\ZZ\Gamma}(C_*^\Gamma(M,\partial M),W)\to C^\Gamma_*(M)\otimes_{\ZZ\Gamma} W.$$
 
Now suppose that $W$ is a finite dimensional $\RR$ vector space equipped with a positive definite inner product $\{\ , \ \}:W\times W\to \RR$ and $\Gamma\to O(W)$ a representation, determining the left $\ZZ\Gamma$ module structure on $W$. There is an   algebraic chain isomorphism  of  $\RR$ chain complexes:
 $$
 C^\Gamma_*(M)\otimes_{\ZZ\Gamma} W\cong \Hom_{\RR}(\Hom_{\ZZ\Gamma}(C^\Gamma_*(M),W),\RR)
, ~ c\otimes w\mapsto\big( h\mapsto \{ h(c),w\}  \big). $$

Composing with  the chain homotopy equivalence $\cap \xi$, and using the universal coefficient theorem for $\RR$, we obtain an isomorphism
$$
H^*(M,\partial M ;W)\cong  H^*(\Hom_{\RR}(\Hom_{\ZZ\Gamma}(C^\Gamma_*(M),W),\RR))=
\Hom_{\RR}(H^*( M;W),\RR)
$$ 
 whose adjoint is the 
(by construction nondegenerate)  {\em cohomology intersection pairing over $W$} 
 \begin{equation}
\label{CNDP}H^*(M,\partial M;W )\times H^*(  M ; W )\to \RR.
\end{equation}
The cohomology intersection pairing can also be expressed in terms of cup  products:
 \begin{equation} 
 \label{cupprod} (x,y)\mapsto \{x\cup y\}\cap \xi,\end{equation}
 where $\{~\}:H^n(M;\partial M;W\otimes_{\ZZ\Gamma}W)\to H^n(M;\partial M;\RR)$
 is induced by the coefficient homomorphism $W\otimes_{\ZZ\Gamma}W\to\RR$  determined by the bilinear form $\{~,~\}$.

\medskip

When the boundary of $M$ is empty, this pairing, which we denote by 
 \begin{equation}
\label{CNDPemp}\omega_{M}: H^*(M;W )\times H^*(  M ; W )\to \RR
\end{equation}
 is therefore a  nondegenerate
inner product on the $\RR$ vector space $H^*(M;W )$.

If $\partial M$ is nonempty, precomposing the injective adjoint 
 $$H^*(  M ; W )\to \Hom_\RR(H^*(M,\partial M;W );\RR)$$ with the restriction map
 $H^*(M,\partial M;W)\to H^*(  M ; W )$
 yields a map 
 $$H^*(M,\partial M;W)\to \Hom_\RR(H^*(M,\partial M;W );\RR)$$
 with kernel equal to the kernel of $H^*(M,\partial M;W)\to H^*(  M ; W )$.  An equivalent statement is that 
the  pairing \begin{equation}\label{rel cup prod} \omega_{(M,\partial M)}:H^*(M,\partial M;W)\times H^*(M,\partial M;W)\to\RR\end{equation}
  has radical equal to $\ker H^*(M,\partial M;W)\to H^*(  M ; W )$.
 
 \medskip

Taking gradings into account,  when $\dim M=2n$  restriction defines a   pairing
  \begin{equation}
\label{CNDP2}\omega_{(M,\partial M)}:H^n(M,\partial M;W )\times H^n(  M ,\partial M; W )\to \RR
\end{equation}
with radical equal to $\ker H^n(M,\partial M;W)\to H^n(  M ; W )$.  

When $\dim M=4\ell +2$, for example when $M$ is a surface, the pairings
 $\omega_M$, and $ \omega_{(M,\partial M)} $ are skew-symmetric: 
$$\omega_M(x,y)=-\omega_M(y,x)~\text{ and }~ \omega_{(M,\partial M)}(x,y)=- \omega_{(M,\partial M)}(y,x).$$

\section{Two and three dimensional manifolds and symplectic linear algebra}
\subsection{ A symplectic form on the first cohomology of surface}

Recall that  $S_+$ denotes the path connected CW complex obtained by adding a disjoint base point  to the oriented surface $S$ and  a 1-cell  connecting   each
path component of $S$ to this base point.  Let $\Gamma=\pi_1(S_+)$.  Its universal cover
$\widetilde S_+\to S_+$ is a regular $\Gamma$ cover, and hence so is its restriction over $S$
$$\widetilde S\to S.$$
A representation
 $\rho:\Gamma\to G$  is fixed.

\medskip

Then $\rho$ determines, via the adjoint action of $G$, the representation $\ad\rho:\Gamma\to O(\frak{g})$, and hence cohomology groups $H^*(S;\frak{g}), H^*(S,\partial S;\frak{g}), $ and $H^*(\partial S;\frak{g})$, with, for example,
$$
H^*(S;\frak{g}):=H^*(\Hom_{\ZZ\Gamma}(C^\Gamma_*(S), \frak{g})).
$$
 If we wish to emphasize $\rho$, we write $H^*(S;\frak{g}_{\ad \rho})$.

 Equation (\ref{CNDPemp}) shows that since the boundary of $S$ is empty, 
 \begin{equation}
\label{closedcase}
\omega_S:H^1(S;\frak{g})\times H^1(S;\frak{g})\to \RR
\end{equation}
  is a nondegenerate skew-symmetric form. 

\medskip

On the other hand,  if $C$ is nonempty, so that the boundary of $F$ is nonempty, Equation (\ref{CNDP2}) shows that 
$$\omega_{(F,\partial F)}:H^1(F,\partial F;\frak{g} )\times H^1(  F ,\partial F;\frak{g})\to \RR
$$
is  in general a degenerate skew symmetric form with radical equal to 
$$\ker H^1(F,\partial F;\frak{g})\to H^1(  F ;\frak{g} ).$$

A degenerate skew symmetric form induces a nondegenerate form on the quotient by its radical.  The exact sequence of the pair $(F,\partial F)$ shows that $\omega_{(F,\partial F)}$ descends to a {\em nondegenerate} skew symmetric form
 \begin{equation}
\label{boundarycase}\widehat\omega_F:\widehat{H}^1(  F ;\frak{g} )\times
\widehat{H}^1(  F ;\frak{g} )
\to \RR,
\end{equation}
  where 
$$\widehat{H}^1(  F ;\frak{g} )={\rm Image } \ H^1(F,\partial F;\frak{g})\to H^1(  F ;\frak{g} )=
\ker H^1(  F ;\frak{g} )\to H^1(\partial F;\frak{g}).$$
Summarizing:
\begin{proposition} Let $S$ be a compact oriented surface without boundary and $\rho:\pi_1S_+\to G$  a representation.  Then  $(H^1(S;\frak{g}_{\ad\rho}),\omega_S)$ is a symplectic vector space.   If $F\subset S$
is the complement of a nonempty disjoint union of annuli $V$,    then  $(\widehat{H}^1( F ;\frak{g}_{\ad\rho} ),\widehat \omega_{F})$ is a symplectic vector space.
\end{proposition}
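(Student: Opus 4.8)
The plan is to verify the two symplectic claims separately, since each reduces to an application of the Poincaré-Lefschetz duality machinery already developed in the excerpt.

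\textbf{The closed case.} For the first assertion, that $(H^1(S;\frak{g}_{\ad\rho}),\omega_S)$ is symplectic, I would simply invoke Equation (\ref{CNDPemp}) and its consequence. Since $S$ is a closed surface, $\partial S=\emptyset$, and $S$ is an oriented $2$-manifold. The general discussion produced a nondegenerate pairing $\omega_M$ on $H^*(M;W)$ whenever $\partial M=\emptyset$, and the remark following Equation (\ref{CNDP2}) records that when $\dim M=4\ell+2$ this pairing is skew-symmetric. Taking $M=S$ (so $\dim M=2$, i.e. $\ell=0$), $W=\frak{g}$ with the $\ad\rho$ module structure, and restricting to degree $1$ gives exactly $\omega_S$ on $H^1(S;\frak{g})$, which is therefore nondegenerate and skew-symmetric, hence symplectic by the definition in the symplectic linear algebra subsection. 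This first half is essentially a citation of the already-established general statement, specialized to the present $M$, $W$, and degree.

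\textbf{The bordered case.} For the second assertion I would take $M=F$, the complementary surface, which is a compact oriented $2$-manifold with nonempty boundary $\partial F=\partial V$. Equation (\ref{CNDP2}) gives a skew-symmetric pairing $\omega_{(F,\partial F)}$ on $H^1(F,\partial F;\frak{g})$ whose radical is precisely $\ker\big(H^1(F,\partial F;\frak{g})\to H^1(F;\frak{g})\big)$. The construction in the excerpt already shows that a degenerate skew form descends to a nondegenerate skew form on the quotient by its radical, and that this quotient is identified with $\widehat{H}^1(F;\frak{g})=\operatorname{Image}\big(H^1(F,\partial F;\frak{g})\to H^1(F;\frak{g})\big)$. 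Thus $\widehat\omega_F$ is a nondegenerate skew-symmetric form on $\widehat{H}^1(F;\frak{g})$, which is the definition of a symplectic vector space.

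\textbf{Points requiring care.} The routine verifications are two: first, that the quotient of $H^1(F,\partial F;\frak{g})$ by the radical of $\omega_{(F,\partial F)}$ is genuinely identified with the image $\widehat{H}^1(F;\frak{g})$ in $H^1(F;\frak{g})$ — this is immediate from the first isomorphism theorem applied to the restriction map, since the radical equals the kernel of that map. Second, one should confirm the alternative description $\widehat{H}^1(F;\frak{g})=\ker\big(H^1(F;\frak{g})\to H^1(\partial F;\frak{g})\big)$, which follows from exactness of the long exact sequence of the pair $(F,\partial F)$ in $\frak{g}$ coefficients. I expect the only genuine subtlety — and it is one the excerpt has deliberately front-loaded — is ensuring that nondegeneracy of $\widehat\omega_F$ on the reduced space is the correct notion: the pairing \eqref{CNDP2} is \emph{a priori} a pairing between $H^1(F,\partial F;\frak{g})$ and itself (both factors are top-degree relative cohomology in dimension $2n=2$), and one must track that the restriction-induced map lands where claimed with exactly the stated kernel. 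Since all of this is established in the preceding paragraphs, the proof is essentially an assembly of \eqref{CNDPemp}, \eqref{CNDP2}, and the exact sequence of the pair, with no new analytic or topological input needed.
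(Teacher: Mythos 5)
Your proposal is correct and follows essentially the same route as the paper: the paper states this Proposition as a summary of the immediately preceding discussion, which is exactly your assembly of the nondegenerate pairing for closed $M$ (Equation (\ref{CNDPemp})), the skew pairing with radical $\ker\big(H^1(F,\partial F;\frak{g})\to H^1(F;\frak{g})\big)$ from Equation (\ref{CNDP2}), and the identification of the quotient by the radical with $\widehat{H}^1(F;\frak{g})$ via the long exact sequence of the pair. No gaps.
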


In the following  diagram of inclusions, the two vertical and two horizontal rows are exact, and the isomorphisms excisions: 
\begin{equation}\label{badass}
\begin{tikzcd}
             &                                  &H^1(S,F) \arrow[d, "q_1"] \arrow[r, "\cong"] & H^1(V,\partial V) \arrow[d, "q_2"]           &             \\
{} \arrow[r] & H^1(S,V) \arrow[r, "\alpha"] \arrow[d,"p_0", "\cong"'] & H^1(S) \arrow[r, "\beta"] \arrow[d, "p_1"] & H^1(V)\arrow[r,"\gamma"] \arrow[d, "p_2"] & H^2(S,V) \arrow[d,"\cong"] \\
{} \arrow[r] & H^1(F,\partial F) \arrow[r, "a"']                & H^1(F) \arrow[r, "b"']                & H^1(\partial F) \arrow[r,"c"']                & H^2(F,\partial F)          
\end{tikzcd}
\end{equation}
\begin{proposition}\label{prop1.2} The kernel of $\beta$ is a cosiotropic subspace of $H^1(S;\frak{g})$ with annihilator $\ker p_1$, and hence $p_1$ induces a symplectomorphism
$$\ker \beta/\ker p_1\xrightarrow{\cong} \widehat{H}^1(F;\frak{g}).$$
 
\end{proposition}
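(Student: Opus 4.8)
The plan is to recognize the assertion as the symplectic reduction of $(H^1(S;\frak g),\omega_S)$ along the coisotropic subspace $\ker\beta$, and to identify this reduction with $\widehat H^1(F;\frak g)$ through $p_1$, using the symplectic linear algebra recalled above together with the exact commutative diagram (\ref{badass}).

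The technical heart is a compatibility identity between the intersection form $\omega_S$ and the decomposition $S=F\cup V$. I would first establish that for $u\in H^1(S,V;\frak g)$ and $x\in H^1(S;\frak g)$,
\[
\omega_S(\alpha(u),x)=\langle p_0(u),p_1(x)\rangle_F ,
\]
where $\langle\,,\,\rangle_F$ is the nondegenerate Poincar\'e--Lefschetz pairing (\ref{CNDP}) for $F$. This follows from the cup-product formula (\ref{cupprod}) for $\omega_S$, the naturality of the relative cup product under the excision/restriction maps $p_0,p_1$, and the fact that $[S]\in H_2(S)$ maps to the fundamental class $[F,\partial F]$ under $H_2(S)\to H_2(S,V)\cong H_2(F,\partial F)$. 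This is the one step that genuinely uses Poincar\'e duality rather than formal diagram chasing, and I expect it to be the main obstacle.

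Granting the identity, the rest is bookkeeping in (\ref{badass}). Since $\ker\beta=\operatorname{Im}\alpha$ by exactness of the top row, and $p_0$ is an isomorphism while $\langle\,,\,\rangle_F$ is nondegenerate, the identity shows that $x$ annihilates $\ker\beta$ iff $p_1(x)=0$; hence ${\rm Annihilator}(\ker\beta)=\ker p_1$. Coisotropy, i.e. $\ker p_1\subseteq\ker\beta$, is where the geometry of $V$ is used: because $V$ is a disjoint union of annuli, each boundary circle includes as a homotopy equivalence, so $p_2\colon H^1(V;\frak g)\to H^1(\partial F;\frak g)$ is injective. For $x\in\ker p_1$ the middle square gives $p_2(\beta(x))=b(p_1(x))=0$, so $\beta(x)=0$. (It is precisely this injectivity of $p_2$ that makes $\ker\beta$ coisotropic rather than merely isotropic.)

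Finally, I would identify the reduction. As $\ker p_1\subseteq\ker\beta$, restriction induces an injection $\bar p_1\colon\ker\beta/\ker p_1\hookrightarrow H^1(F;\frak g)$ landing in $\ker b=\widehat H^1(F;\frak g)$. Surjectivity onto $\widehat H^1(F;\frak g)$ is a short chase: given $y=a(w)\in\operatorname{Im}a$, set $u=p_0^{-1}(w)$ and $x=\alpha(u)\in\ker\beta$; then $p_1(x)=a(p_0(u))=y$ by commutativity of the left square. Thus $\bar p_1$ is an isomorphism. That it is a symplectomorphism follows by combining the compatibility identity with the definition (\ref{CNDP2}) of $\omega_{(F,\partial F)}$ as $\langle w,a(w')\rangle_F$ and the descent (\ref{boundarycase}): for $x=\alpha(u),x'=\alpha(u')$ one computes $\omega_S(x,x')=\langle p_0(u),a(p_0(u'))\rangle_F=\omega_{(F,\partial F)}(p_0u,p_0u')=\widehat\omega_F(p_1x,p_1x')$.
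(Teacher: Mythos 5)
Your proposal is correct and takes essentially the same route as the paper's proof: a chase in diagram (\ref{badass}) combined with naturality of the Poincar\'e--Lefschetz pairing under the decomposition $S=F\cup V$, with the annihilator identification $\mathrm{Annihilator}(\ker\beta)=\ker p_1$ coming from nondegeneracy of the pairing (\ref{CNDP}) for $F$ and coisotropy from the fact that $V$ is a union of annuli (your injectivity of $p_2$ is equivalent to the paper's $q_2=0$). The only differences are expository: you isolate the identity $\omega_S(\alpha(u),x)=\langle p_0(u),p_1(x)\rangle_F$ and use it for both inclusions, whereas the paper proves $\ker p_1\subset\mathrm{Annihilator}(\ker\beta)$ by observing that the relevant cup product factors through $H^2(S,F\cup V)=0$, and you write out the concluding form-preserving isomorphism onto $\widehat{H}^1(F;\frak{g})$, which the paper leaves implicit.
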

 \begin{proof}  The surface $V$ is a disjoint union of annuli.  The composition  $\{0\}\times  S^1\subset \partial(I\times S^1)\subset I\times S^1$  a homotopy equivalence, and hence the restriction $H^1(I\times S^1)\to H^1(\partial(I\times S^1))$ is injective with any coefficients.  Hence $p_2$ is injective, and  $q_2=0$.

  If $s\in H^1(S)$ satisfies $\omega_S(s,\alpha(y))=0$ for all $y\in H^1(S,V)$, then 
 $\omega_{F,\partial F}(p_1(s), a\circ p_0(y))=0$ for all $y\in H^1(S,V)$. Hence
 $\omega_{F,\partial F}(p_1(s), a(z))=0$ for all $z\in H^1(F,\partial F)$. Since the pairing
 $H^1(F,\partial F)\times H^1(F)\to \RR$ is nondegenerate (Equation (\ref{CNDP})), this implies that $p_1(s)=0$.
 In other words, the annihilator of $\text{image }\alpha=\ker \beta$ is contained in $\ker p_1$.
 
  Since $q_2=0$, $\ker p_1\subset\ker\beta$. 
Therefore   $\ker \beta$ contains its annihilator and hence is coisotropic.

It remains to show that $\ker p_1=\text{image }q_1$ is contained in the annihilator of $\ker \beta$. 
 Given $x\in H^1(S,V)$ and $y\in H^1(S,F)$, 
 $\omega_S(\alpha(x), q_1(y))=0$ 
 since the cup product
 $$H^1(S,V)\times H^1(S,F)\to H^2(S)$$ factors through $H^2(S,F\cup V)=0$
(see Equation (\ref{cupprod})).
 \end{proof}
 
\begin{corollary}\label{reduction}
If   $L\subset H^1(S;\frak{g}_{\ad \rho})$ is any Lagrangian subspace, then the  image
$$p_1(L\cap \ker \beta)\subset   \widehat{H}^1(F;\frak{g}_{\ad \rho})$$
is a Lagrangian subspace.
\end{corollary}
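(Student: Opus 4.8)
The plan is to recognize Corollary \ref{reduction} as a direct instance of the symplectic-reduction-of-Lagrangians fact recorded in the symplectic linear algebra preliminaries, applied to the coisotropic subspace produced by Proposition \ref{prop1.2}. All the substantive work has already been carried out: Proposition \ref{prop1.2} establishes that $C := \ker\beta$ is a coisotropic subspace of the symplectic vector space $(H^1(S;\frak{g}),\omega_S)$ with annihilator $\ker p_1$, and that $p_1$ descends to a symplectomorphism from the reduction $\ker\beta/\ker p_1$ onto $(\widehat{H}^1(F;\frak{g}),\widehat\omega_F)$. What remains is only to assemble these ingredients.

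Concretely, I would take $A = H^1(S;\frak{g})$, $\omega = \omega_S$, and $C = \ker\beta$, so that ${\rm Annihilator}(C) = \ker p_1$ and $\widehat{C} = \ker\beta/\ker p_1$. The general statement on symplectic reduction of Lagrangian subspaces then applies verbatim: for the given Lagrangian $L \subset A$, the subquotient
$$\hat L := (L\cap C)/(L\cap{\rm Annihilator}(C)) = (L\cap\ker\beta)/(L\cap\ker p_1)$$
is a Lagrangian subspace of the reduction $(\widehat{C},\widehat\omega)$.

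Finally, I would transport $\hat L$ across the symplectomorphism $p_1\colon \widehat{C} \xrightarrow{\cong} \widehat{H}^1(F;\frak{g})$ furnished by Proposition \ref{prop1.2}. Since a symplectomorphism carries Lagrangians to Lagrangians, the image of $\hat L$ is Lagrangian in $\widehat{H}^1(F;\frak{g})$; and because the induced map on $\widehat{C}$ is simply $p_1$ applied to representatives, that image is exactly $p_1(L\cap\ker\beta)$, the subspace in the statement. The single point worth making explicit, though it is immediate, is that this image coincides with $p_1(L\cap\ker\beta)$ rather than some proper subspace: this holds because $p_1$ factors through the quotient $\ker\beta/\ker p_1$ by the very definition of its annihilator, so passing to the quotient before applying $p_1$ changes nothing.

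I expect essentially no obstacle here; the corollary is a formal consequence, and the genuine content lies entirely in Proposition \ref{prop1.2} — the coisotropy of $\ker\beta$ and the computation that its annihilator is $\ker p_1$ — together with the abstract reduction lemma. The only thing I would double-check is that the hypotheses of that abstract lemma are met on the nose, namely that $\omega_S$ is nondegenerate (it is, being the closed-case pairing of Equation (\ref{closedcase})) and that $L$ is genuinely Lagrangian rather than merely isotropic, which is the standing assumption of the statement.
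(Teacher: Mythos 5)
Your proposal is correct and is essentially the paper's own (implicit) argument: the corollary is stated there as an immediate consequence of Proposition \ref{prop1.2} combined with the symplectic reduction fact recorded in the preliminaries, exactly as you assemble it. Your extra checks --- that $\omega_S$ is nondegenerate via Equation (\ref{closedcase}) and that the induced map on the quotient $\ker\beta/\ker p_1$ is just $p_1$ on representatives --- are sound and faithful to the paper's setup.
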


\subsection{Restriction from a 3-manifold with boundary}
Recall that $Y$ is a compact, connected, oriented
3-manifold with boundary $S=\partial Y$, extended to  an embedding  $S_+\subset Y$.   

Assume that the
representation $\rho:\pi_1(S_+)\to G$ is a restriction of a representation  (of the same name) $\rho:\pi_1(Y)\to G$.

\begin{lemma}\label{lemma1} The image of the restriction map, 
$$L_Y:={\rm Image } \ r:H^1(Y;\frak{g}_{\ad \rho})\to H^1(S;\frak{g}_{\ad \rho})$$
is a Lagrangian subspace of $(H^1(S;\frak{g}_{\ad \rho}),\omega_S)$.
\end{lemma}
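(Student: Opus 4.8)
The plan is to identify $L_Y$ with both the image of the restriction $r$ and its own $\omega_S$-annihilator, so that the Lagrangian condition $L_Y=L_Y^{\perp}$ drops out of exactness together with Poincar\'e--Lefschetz duality; here $L_Y^{\perp}=\{x\in H^1(S;\frak g)\mid \omega_S(x,\ell)=0\text{ for all }\ell\in L_Y\}$. First I would write the relevant segment of the long exact cohomology sequence of the pair $(Y,S)$ with coefficients in $\frak g_{\ad\rho}$,
\begin{equation*}
H^1(Y;\frak g)\xrightarrow{\ r\ }H^1(S;\frak g)\xrightarrow{\ \delta\ }H^2(Y,S;\frak g),
\end{equation*}
so that exactness gives $L_Y=\ker\delta$ at once.

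The heart of the proof is a compatibility between the connecting map $\delta$ and the form $\omega_S$. Since $\frak g$ carries the nondegenerate $\ad$-invariant form $\{\,,\,\}$, the coefficient system $\frak g_{\ad\rho}$ is self-dual, and the construction of Section \ref{pdif} (Equation (\ref{CNDP}) in the top grading $2+1=3$) furnishes a nondegenerate pairing
\[
\langle\,,\,\rangle\colon H^2(Y,S;\frak g)\times H^1(Y;\frak g)\to\RR .
\]
I would then establish, for all $x\in H^1(S;\frak g)$ and $y\in H^1(Y;\frak g)$, the formula
\[
\langle \delta x,\,y\rangle=\pm\,\omega_S\big(x,\,r(y)\big).
\]
This follows from the cup-product expression (\ref{cupprod}) for $\omega_S$ together with the Leibniz rule $\delta(x)\cup y=\pm\,\delta\big(x\cup r(y)\big)$ for the pair $(Y,S)$: evaluating $\delta(x)\cup y$ against the relative fundamental class $[Y,S]$ and using that the coboundary is adjoint to $\partial[Y,S]=[S]$ rewrites the left-hand pairing as $\pm\{x\cup r(y)\}\cap[S]=\pm\,\omega_S(x,r(y))$.

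Granting this formula, the conclusion is formal. Because $\langle\,,\,\rangle$ is nondegenerate, an element $x$ lies in $\ker\delta$ if and only if $\langle\delta x,y\rangle=0$ for every $y\in H^1(Y;\frak g)$, equivalently $\omega_S(x,r(y))=0$ for every $y$, equivalently $x\in L_Y^{\perp}$. Hence $\ker\delta=L_Y^{\perp}$, and combining with $L_Y=\ker\delta$ yields $L_Y=L_Y^{\perp}$, which is precisely the assertion that $L_Y$ is Lagrangian. Note that both isotropy ($L_Y\subseteq L_Y^{\perp}$) and maximality ($L_Y^{\perp}\subseteq L_Y$) emerge simultaneously, with no separate dimension count.

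I expect the only real obstacle to be the sign-and-naturality bookkeeping in the displayed compatibility formula: one must reconcile the chain-level cap product (\ref{rawPD}), the coefficient contraction $\{\,,\,\}\colon\frak g\otimes\frak g\to\RR$, the relative versus boundary fundamental classes, and the orientation conventions, so that the two sides agree up to the (irrelevant) overall sign. Once that identification is secured the argument is purely homological; in particular it never uses closedness of any $2$-form and, as the subsequent Corollary \ref{flatthmtangle} records, goes through verbatim for any compact Lie group $G$ equipped with an $\ad$-invariant inner product.
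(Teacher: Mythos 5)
Your proof is correct, and it organizes the duality argument differently from the paper. The paper proves the two halves of the Lagrangian condition separately: (i) half-dimensionality, by placing the segment $H^1(Y)\xrightarrow{r}H^1(S)\xrightarrow{\nu}H^2(Y,S)$ of the long exact sequence of $(Y,S)$ in a sign-commutative diagram between its universal-coefficient dual and its Poincar\'e--Lefschetz dual homology sequence, concluding that the image and cokernel of $r$ are isomorphic, so $\dim L_Y=\tfrac12\dim H^1(S;\frak{g})$; and (ii) isotropy, by naturality of cup products: $\omega_S(r(x),r(y))$ factors through the composite $H_1(Y,S;\RR)\to H_0(S;\RR)\to\RR$, which vanishes because of exactness of $H_1(Y,S)\to H_0(S)\to H_0(Y)$ and the fact that the augmentation on $H_0(S)$ factors through $H_0(Y)$. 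You instead prove the single identity $L_Y=L_Y^{\perp}$: exactness gives $L_Y=\ker\delta$, and your adjunction formula $\langle\delta x,y\rangle=\pm\,\omega_S(x,r(y))$, combined with nondegeneracy of the pairing $H^2(Y,S;\frak{g})\times H^1(Y;\frak{g})\to\RR$ from Equation (\ref{CNDP}), gives $\ker\delta=L_Y^{\perp}$, so isotropy and maximality come out simultaneously with no dimension count. The underlying content is the same --- your adjunction formula is essentially the sign-commutativity of the square in the paper's first diagram relating $\nu$ (your $\delta$) to $\delta^*$, which the paper quotes from the literature rather than proves --- but the trade-off is real: the paper's route needs only diagram-level naturality statements and rank-nullity, while yours requires the chain-level facts that $\delta$ is an $H^*(Y)$-module map, $\delta(x\cup r(y))=\pm\,\delta(x)\cup y$, and that the coboundary is adjoint to $\partial[Y,S]=[S]$ under cap product (both standard, and valid with $\frak{g}_{\ad\rho}$ coefficients since the contraction $\{\,,\,\}$ is $\ad$-invariant). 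In return, your packaging makes the mechanism more transparent: $r$ and $\delta$ are adjoint up to sign, so ``image of $r$ equals its own annihilator'' is literally exactness at $H^1(S)$; and, as you note, only vanishing matters, so the unresolved overall sign is harmless. Both arguments share the paper's key virtues: closedness of $\omega_S$ is never used, and everything works for any compact $G$ with an $\ad$-invariant inner product.
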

\begin{proof}
In the following diagram, the middle row is part of the exact sequence of the pair $(Y,S)$. 
 The vertical arrows all isomorphisms, with the downward pointing isomorphisms Poincar\'e-Lefschetz duality. The diagram commutes up to sign \cite{MR1325242}. We suppress the $\frak{g}_{\ad\rho}$ coefficients.

\[
\begin{tikzcd}
 \Hom(H_1(Y);\RR) \arrow[r]                     & \Hom(H_1(S);\RR)  \arrow[r,"\delta^*"]     &\Hom(H_2(Y,S);\RR)  \\
 H^1(Y) \arrow[r,"r"] \arrow[d] \arrow[u] & H^1(S) \arrow[d] \arrow[u] \arrow[r,"\nu"]&H^2(Y,S) \arrow[d] \arrow[u] \\
 H_2(Y,S) \arrow[r,"\delta"]                     & H_1(S) \arrow[r]&H_1(Y)                   
\end{tikzcd}
\]
The  image of $r$ equals the kernel of $\nu$, which is isomorphic to the kernel of $\delta^*$. The kernel of $\delta^*$ is isomorphic to the cokernel of its dual $\delta$, which in turn is isomorphic to the cokernel of $r$. Hence the image and cokernel of $r$ are isomorphic, and so  $ \dim(\text{image}(r))=\tfrac{1}{2}\dim H^1(S)$.

Commutativity of the following diagram is a consequence of naturality of cup product and Poincar\'e duality:

\[
\begin{tikzcd}
H^1(Y;\frak{g})\times H^1(Y;\frak{g}) \arrow[r,"\cup_{\{  ,  \}}"] \arrow[d, "r\times r"] & H^2(Y;\RR) \arrow[r,    "\cap{[Y,S]}"           ] \arrow[d] & H_1(Y, S;\RR) \arrow[d] \arrow[rd, dotted] &   \\
H^1(S;\frak{g})\times H^1(S;\frak{g}) \arrow[r,"\cup_{\{  ,  \}}"]           & H^2(S;\RR) \arrow[r,    "\cap{[S]}"           ]         & H_0( S;\RR)  \arrow[r,"\ep"] \arrow[d]  & \RR \\
                      &                       & H_0( Y;\RR) \arrow[ru,"\ep"']           &  
\end{tikzcd}
\]
Exactness  of the vertical sequence   shows that the dotted arrow is
zero, which implies that the image of $r$ is isotropic, and therefore Lagrangian. 
 \end{proof}

Recall that the boundary $S=\partial Y$ is equipped with a
embedded collection $C\subset S$ of circles, with tubular neighborhood $V$ and complementary subsurface $F$, producing
the 
decomposition $S=F\cup V$ as in Equation (\ref{deco1}).  
Consider the ladder of exact sequences, with  all maps induced by inclusions.  The $\frak{g}_{\ad\rho}$ coefficients are supressed. The bottom two rows coincide with those of Diagram (\ref{badass}).
\begin{equation}\label{diag2}
\begin{tikzcd}
\cdots \arrow[r]& H^1(Y,V) \arrow[r,"A" ] \arrow[d, "r_0"] & H^1(Y) \arrow[r,"B"  ] \arrow[d,"r"] & H^1(V) \arrow[d,equal]  \arrow[r]&\cdots\\
\cdots\arrow[r]&H^1(S,V)\arrow[r, "\alpha"]\arrow[d,"p_0", "\cong" '] & H^1(S)\arrow[r,"\beta"]\arrow[d,"p_1"]&H^1(V)\arrow[d,"p_2"]\arrow[r]&\cdots\\
\cdots\arrow[r]&H^1(F,\partial F) \arrow[r,"a" ] & H^1(F)\arrow[r,"b"]&H^1(\partial F)\arrow[r]&\cdots
\end{tikzcd}
\end{equation}
A diagram chase shows   that ${\rm Image }\ r\cap \ker \beta={\rm Image }\ \alpha\circ r_0$.
Hence 
$$
p_1({\rm Image }\ r\cap \ker \beta)={\rm Image } \ a\circ p_0\circ r_0:H^1(Y,V)\to H^1(F) 
\subset \ker b=\widehat H^1(F)$$

Lemma \ref{lemma1} and  Corollary \ref{reduction} imply the following.

\begin{corollary} \label{thepoint} Suppose that $\partial Y=S=V\cup F$ with  $V$  a disjoint union of annuli.

Then 
$$ 
L_{Y,V}:= {\rm Image } \  H^1(Y,V;\frak{g}_{\ad \rho})\to H^1(F;\frak{g}_{\ad \rho})\subset  \widehat{H}^1(F;\frak{g}_{\ad \rho})
$$ is  a Lagrangian subspace. Moreover,  $L_{Y,V}$ is the symplectic reduction of $L_Y$ with respect to $\ker \beta:H^1(S;\frak{g}_{\ad \rho})\to H^1(V;\frak{g}_{\ad \rho})$.
 
\end{corollary}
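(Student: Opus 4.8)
The plan is to assemble the corollary directly from Lemma \ref{lemma1}, Corollary \ref{reduction}, and Proposition \ref{prop1.2}, together with the diagram chase for Diagram (\ref{diag2}) recorded just above the statement. The guiding principle is that $L_{Y,V}$ is nothing other than the symplectic reduction of the Lagrangian $L_Y$ with respect to the coisotropic subspace $\ker\beta$; once this identification is made, both assertions of the corollary follow formally from results already established.

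First I would recall that by Lemma \ref{lemma1} the subspace $L_Y={\rm Image}\ r\subset H^1(S;\frak{g}_{\ad\rho})$ is Lagrangian for $\omega_S$. Applying Corollary \ref{reduction} to the Lagrangian $L=L_Y$ immediately yields that $p_1(L_Y\cap\ker\beta)\subset\widehat H^1(F;\frak{g}_{\ad\rho})$ is a Lagrangian subspace. The diagram chase above the statement identifies this image concretely: since ${\rm Image}\ r\cap\ker\beta={\rm Image}\,(\alpha\circ r_0)$, one has $p_1(L_Y\cap\ker\beta)={\rm Image}\,(a\circ p_0\circ r_0)$, and by commutativity of Diagram (\ref{diag2}) this is exactly the image of $H^1(Y,V;\frak{g}_{\ad\rho})\to H^1(F;\frak{g}_{\ad\rho})$, namely $L_{Y,V}$. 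This already establishes that $L_{Y,V}$ is Lagrangian in $\widehat H^1(F;\frak{g}_{\ad\rho})$.

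It then remains to match $p_1(L_Y\cap\ker\beta)$ with the symplectic reduction of $L_Y$ in the sense of the symplectic linear algebra preliminaries. By Proposition \ref{prop1.2}, the subspace $\ker\beta$ is coisotropic with annihilator $\ker p_1$, and $p_1$ induces a symplectomorphism $\ker\beta/\ker p_1\xrightarrow{\cong}\widehat H^1(F;\frak{g}_{\ad\rho})$. By definition, the symplectic reduction of $L_Y$ with respect to $\ker\beta$ is the subquotient $(L_Y\cap\ker\beta)/(L_Y\cap\ker p_1)\subset\ker\beta/\ker p_1$. Using the containment $\ker p_1\subset\ker\beta$ (again from Proposition \ref{prop1.2}), the restriction of $p_1$ to $L_Y\cap\ker\beta$ has kernel $L_Y\cap\ker p_1$, so $p_1$ carries this subquotient isomorphically onto $p_1(L_Y\cap\ker\beta)=L_{Y,V}$. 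Thus $L_{Y,V}$ is precisely the image under the symplectomorphism of Proposition \ref{prop1.2} of the symplectic reduction of $L_Y$, which completes the proof.

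As for the main obstacle: the genuinely substantive inputs --- that $L_Y$ is maximal isotropic and that $\ker\beta$ is coisotropic with annihilator exactly $\ker p_1$ --- have already been carried out in Lemma \ref{lemma1} and Proposition \ref{prop1.2}, so the work here is essentially bookkeeping. The one point requiring care is verifying that the purely algebraic definition of the symplectic reduction of a Lagrangian coincides with the concrete subquotient $p_1(L_Y\cap\ker\beta)$; this hinges on the equality $L_Y\cap\ker p_1=L_Y\cap\ker\beta\cap\ker p_1$, where the inclusion $\ker p_1\subset\ker\beta$ is used, while everything else is formal.
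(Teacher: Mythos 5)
Your proposal is correct and follows essentially the same route as the paper: the paper's own (very terse) proof is exactly the combination of Lemma \ref{lemma1}, Corollary \ref{reduction}, and the diagram chase on Diagram (\ref{diag2}) identifying $p_1(L_Y\cap\ker\beta)$ with ${\rm Image}\,(a\circ p_0\circ r_0)=L_{Y,V}$. Your additional bookkeeping step, using $\ker p_1\subset\ker\beta$ from Proposition \ref{prop1.2} to match $p_1(L_Y\cap\ker\beta)$ with the abstract subquotient $(L_Y\cap\ker\beta)/(L_Y\cap\ker p_1)$ defining the symplectic reduction, is a correct and welcome elaboration of what the paper leaves implicit.
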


\section{Character varieties, relative character varieties, and their tangent spaces}

 \subsection{Character varieties}

 \begin{definition}
 Given a finitely presented group $\Gamma$, its {\em $G$ character variety} $\chi_G(\Gamma)$  is the real semi-algebraic set   defined to be the orbit space of the $G$-conjugation action on the affine $\RR$-algebraic set $\Hom(\Gamma,G)$. A set of $g$ generators of $\Gamma$ embeds $\Hom(\Gamma,G)$ in $G^g$ equivariantly, and, since $G$ is a compact Lie group, $\Hom(\Gamma,G)$ is an affine $\RR$-algebraic set, with orbit space $\chi_G(\Gamma)$ a semi-algebraic set.  We call $\chi_G(\Gamma)$ and (orbit spaces of conjugation invariant) Zariski closed subsets {\em character varieties.}  If  $Z$ is a path connected space, write $\chi_G(Z)$ instead of $\chi_G(\pi_1(Z))$.
The character variety of a non-path connected space, {\em by definition}, is the cartesian product of the character varieties of its path components.     
   \end{definition}

 \medskip

As observed by Weil  \cite{MR169956}, the {\em  formal  tangent space at }
 the conjugacy class of  {\em any} representation $\rho:\Gamma\to G$ to the character variety  $\chi_G(\Gamma)$   is naturally identified with  first cohomology: \begin{equation}\label{FTS}
T_\rho\chi_G(\Gamma)=H^1(\Gamma;\frak{g}_{\ad \rho}).
\end{equation}
We take Equation (\ref{FTS}) as the definition of the formal tangent space at $\rho$ for any $\rho\in \chi_G(\Gamma)$. Recall that for any space $X$ with fundamental group $\Gamma$, $H^1(\Gamma;\frak{g}_{\ad \rho})$ and $H^1(X;\frak{g}_{\ad \rho})$ are canonically isomorphic.

Weil's argument is based on the calculation that if  a path of representations is expressed in the form $\rho_s=\exp(\alpha_s)\rho_0$ for some path 
$\alpha_s:\Gamma\to \frak{g}$, then $\tfrac{d}{ds}|_{s=0}~\alpha_s:\Gamma\to\frak{g}$ is a 1-cocycle \cite{MR1324339}.

\subsection{Relative character varieties}
As above, assume that $Y$ is a compact connected 3-manifold
with boundary $S=\partial Y$,  $C\subset S$ is a  union of $m$ circles $C_i$, $V$ is the tubular neighborhood of $C$ in $S$, with complementary surface $F$.  

Assume further that either $J$ is invariant via the inversion map of $G$ (as is the case for the conjugacy class of traceless matrices in $SU(2)$) or else assume that every circle   $C_i$ is equipped with an orientation.

 Then define the {\em relative character variety} to be
 \begin{equation}
\label{relchi}
\chi_{G,J}(Y,C)\subset \chi_G(Y)
\end{equation}
  to be  the subvariety consisting of conjugacy classes of representations $\pi_1(Y)\to G$ which send   (any based representative of the  homotopy class of) each circle in $C$ into $J$. 
 Define the {\em formal tangent space}  of $\chi_{G,J}(Y,C)$   at $\rho$ to be
 \begin{equation}\label{RFTS}
T_\rho\chi_{G,J}(Y,C)= \ker H^1(Y;\frak{g}_{\ad\rho})\to H^1(C;\frak{g}_{\ad\rho})
\end{equation}

Given an oriented surface $F$,   define  
$$ \chi_{G,J}(F,\partial F)=\chi_{G,J}(F\times [0,1], \partial F\times \{\tfrac 1 2\}).$$ 
   Its formal tangent space is
\begin{equation}\label{RSFTS}
T_\rho\chi_{G,J}(F,\partial F)= \ker H^1(F;\frak{g}_{\ad\rho})\to H^1(\partial F;\frak{g}_{\ad\rho})=\widehat H^1(F;\frak{g}_{\ad\rho})\end{equation}

\medskip

 \subsection{Regular points}\label{regpts}
The term ``formal tangent space'' may be replaced by its usual elementary differential topology notion in neighborhoods of  {\em regular points}  of   $\chi_{G,J}(Y,C)$ and $\chi_{G,J}(F,\partial F)$, as we next explain. In brief, as elsewhere in gauge theory, a regular point is one which has a neighborhood diffeomorphic to Euclidean space 
of the correct index-theoretic dimension.  We provide a stripped-down explanation, suitable for our purposes,  of what this means, for the benefit of the reader.

First, given a connected compact surface $F$ (with possibly empty boundary) we call $\rho\in \chi_{G,J}(F,\partial F)$ a regular point provided $\rho$ has a neighborhood $U\subset \chi_{G,J}(F,\partial F)$ so that 
 $\dim \widehat H^1(F;\frak{g}_{\ad\rho'})$ 
is independent of $\rho'\in U$. 

Next, given a disjoint union of connected compact surfaces $F=\sqcup_iF_i$, call
 $$\rho\in \chi_{G,J}(F,\partial F)=\prod_i\chi_{G,J}(F_i,\partial F_i)$$  a regular point
 provided each of its components is a regular point. 
 
 Finally,  for a pair $(Y,C)$ (with $C$ possibly empty) $\rho\in  \chi_{G,J}(Y,C)$ is called a regular point provided $\rho$ admits
 a neighborhood  $U\subset \chi_{G,J}(Y,C)$ so that for all $\rho'\in U$:
 \begin{itemize}
\item The restriction map $\chi_{G,J}(Y,C)\to \chi_{G,J}(F,\partial F)$ takes $\rho'$ to a regular point, and
\item $$\dim T_{\rho'}\chi_{G,J}(Y,C)=\tfrac 1 2 \dim T_\rho\chi_{G,J}(F,\partial F)
=\tfrac 1 2 \sum_{i}  \dim T_\rho\chi_{G,J}(F_i,\partial F_i)$$
\end{itemize}

Hence, if $\rho\in  \chi_{G,J}(Y,C)$ is a regular point, the map $\chi_{G,J}(Y,C)\to \chi_{G,J}(F,\partial F)$, which takes a representation of a 3-manifold group to its restriction to the boundary surface, is, near $\rho$, a smooth map of a smooth $n$-disk into $\RR^{2n}$ for some $n$.
\medskip

    Notice that $\chi_{G,J}(Y,C)$ is the preimage of the point $(J,\cdots, J)$
   under the restriction map
   $$\chi_G(Y)\to \chi_G(C)=\prod_{i=1}^m\chi_G(C_i)= (G/_{\rm conjugation})^m.$$
and that $\chi_{G,J}(F,\partial F)$ is the preimage of the point $(J,\cdots, J)$
   under the restriction map
   $$\chi_G(F)\to \chi_G(\partial F).$$

Since $C\subset V$ is a deformation retract, the exact sequence of the pair shows
that
\[
T_\rho \chi_{G,J}(Y,C)\cong {\rm Image} \ H^1(Y,V;\frak{g}_{\ad\rho})\to H^1(Y;\frak{g}_{\ad\rho}).
\]

The image of the differential of the restriction map $\chi_{G,J}(Y,C)\to \chi_{G,J}(F,\partial F)$ at $\rho\in 
\chi_{G,J}(Y,C)$ is therefore identified with $L_{Y,V}$, the image of the composition
$$
H^1(Y,V;\frak{g}_{\ad\rho})\to H^1(F,\partial F;\frak{g}_{\ad\rho})\to \widehat H^1(F;\frak{g}_{\ad\rho}),
$$
which by Corollary \ref{thepoint} is a Lagrangian subspace of $(H^1(F;\frak{g}_{\ad\rho}), \widehat \omega_F)$.  Summarizing:  
\begin{corollary}
  \label{tm4} If $\rho\in \chi_{G,J}(Y,C)$ is a regular point,  then there exists a neighborhood of $\rho$ so that the differential of the restriction $\chi_{G,J}(Y,C) \to  \chi_{G,J}(F,\partial F)$ at any point $\rho'$  in this neighborhood has image a Lagrangian subspace of $(\widehat H^1(F;\frak{g}_{\ad\rho'}),\widehat\omega_F)$.\end{corollary}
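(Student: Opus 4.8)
The plan is to observe that nearly all of the substance has already been assembled in the lead-up, so that proving Corollary \ref{tm4} amounts to packaging Corollary \ref{thepoint} together with the identification of the differential of the restriction map, and then invoking the regular-point hypothesis of Section \ref{regpts} to upgrade these identifications from \emph{formal} tangent spaces to genuine ones, uniformly over a neighborhood of $\rho$.

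First I would treat the purely algebraic core at an arbitrary representation $\rho'$. Since $C\subset V$ is a deformation retract, the long exact sequence of the pair $(Y,V)$ identifies the formal tangent space $T_{\rho'}\chi_{G,J}(Y,C)$ of Equation (\ref{RFTS}) with the image of $H^1(Y,V;\frak{g}_{\ad\rho'})\to H^1(Y;\frak{g}_{\ad\rho'})$. Carrying this through restriction to the boundary, the differential of the restriction map is represented by the composition
\[
H^1(Y,V;\frak{g}_{\ad\rho'})\to H^1(F,\partial F;\frak{g}_{\ad\rho'})\to \widehat H^1(F;\frak{g}_{\ad\rho'}),
\]
whose image is precisely the subspace $L_{Y,V}$ of Corollary \ref{thepoint}. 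That corollary then yields immediately that $L_{Y,V}$ is a Lagrangian subspace of $(\widehat H^1(F;\frak{g}_{\ad\rho'}),\widehat\omega_F)$. I emphasize that this step is purely linear-algebraic: it uses Poincar\'e--Lefschetz duality through Corollary \ref{thepoint} but no regularity, and so holds verbatim at every $\rho'$.

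Next I would bring in the regular-point hypothesis to justify that these formal tangent spaces are the genuine tangent spaces of smooth manifolds throughout a neighborhood $U$ of $\rho$, and that the differential of the smooth restriction map coincides with the algebraic composition above. By the definition of a regular point, $\dim\widehat H^1(F;\frak{g}_{\ad\rho'})$ is locally constant and $\dim T_{\rho'}\chi_{G,J}(Y,C)=\tfrac12\dim T_{\rho'}\chi_{G,J}(F,\partial F)$ for all $\rho'\in U$; equivalently, near $\rho$ the restriction map is a smooth map of a smooth $n$-disk into $\RR^{2n}$ with formal and genuine tangent spaces agreeing (via Equations (\ref{RFTS}) and (\ref{RSFTS})). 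Combining this with the previous paragraph gives that at each $\rho'\in U$ the differential has image the Lagrangian subspace $L_{Y,V}$, which is the assertion. The genuinely subtle point — maximality of the isotropic image, resting on the half-dimensionality from Lemma \ref{lemma1} and the symplectic-reduction computation — has already been absorbed into Corollary \ref{thepoint}; the remaining obstacle here is the bookkeeping verification that the differential of the restriction map really is the algebraic map $H^1(Y,V)\to\widehat H^1(F)$, and that regularity makes this valid as a differential of smooth manifolds at every nearby $\rho'$ rather than only at $\rho$.
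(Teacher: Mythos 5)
Your proposal is correct and follows essentially the same route as the paper: the paper likewise identifies $T_{\rho'}\chi_{G,J}(Y,C)$ with the image of $H^1(Y,V;\frak{g}_{\ad\rho'})\to H^1(Y;\frak{g}_{\ad\rho'})$ via the deformation retract $C\subset V$, identifies the image of the differential of restriction with $L_{Y,V}$, and invokes Corollary \ref{thepoint}, with the regular-point hypothesis supplying the smooth structure on a neighborhood. Your write-up simply makes explicit the bookkeeping (uniformity over $U$ and the agreement of formal with genuine tangent spaces) that the paper leaves implicit in its ``Summarizing'' paragraph.
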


  \medskip

It is known that for surfaces, with the exception of a few low genus cases, the regular points coincide with the irreducible representations.

\color{black}

\subsection{Symplectic structure}

The proof  of Corollary \ref{tm4}  does not rely of the following fundamental result  of Atiyah-Bott \cite{Atiyah_Bott} and its extensions due to   Goldman  \cite{Goldman},  Karshon \cite{MR1112494}, Biswas-Guruprasad \cite{MR1232983}, King-Sengupta \cite{MR1372813},  Guruprasad-Huebschmann-Jeffrey-Weinstein \cite{MR1460627}.

\begin{theorem}\label{AB}
On the top stratum  of   regular points of $\chi_G(S)$ and $\chi_{G,J}(F,\partial F)$, the 2-forms $\omega_S$ and $\widehat \omega_F$ are closed, that is,  are symplectic forms.
\end{theorem}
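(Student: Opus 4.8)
The nondegeneracy half of the statement is already in hand: Equation (\ref{CNDPemp}) and the Proposition preceding Diagram (\ref{badass}) show that $\omega_S$ and $\widehat\omega_F$ are nondegenerate skew forms on each formal tangent space. The genuinely new content of Theorem \ref{AB} is \emph{closedness}, which is not a pointwise linear-algebra statement but a statement about how the family of forms $\{\omega_\rho\}$ varies as $\rho$ ranges over the smooth stratum. The plan is therefore to work one level up, on the smooth locus of $\Hom(\pi_1 S, G)$ (respectively of the representation variety cut out by the $J$-conditions), where a tangent vector at $\rho$ is literally a group $1$-cocycle $u\colon \pi_1 S\to\frak{g}_{\ad\rho}$, and to exhibit a $G$-invariant, basic, closed $2$-form $\widetilde\omega$ there whose reduction along the conjugation action is $\omega_S$. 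Using the cup-product description of Equation (\ref{cupprod}), $\widetilde\omega_\rho(u,v)=\{u\cup v\}\cap\xi$, so that invariance together with the fact that infinitesimal conjugations are coboundaries (which annihilate the pairing, since $u\cup\delta\alpha=\pm\delta(u\cup\alpha)$ caps trivially with the fundamental cycle of the closed surface) already gives that $\widetilde\omega$ descends; this is the content behind Corollary \ref{tm4}.

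The cleanest route to closedness is symplectic reduction in the sense of Atiyah--Bott \cite{Atiyah_Bott}. One realizes the moduli space as a Marsden--Weinstein quotient: on the affine space $\mathcal{A}$ of connections on a $G$-bundle over $S$, the $2$-form $\Omega(a,b)=\int_S\{a\wedge b\}$ is translation invariant, hence constant, hence closed; the gauge group acts preserving $\Omega$ with moment map the curvature, and the holonomy map identifies $\chi_G(S)$ with the reduction $F^{-1}(0)/\mathcal{G}$ together with the form induced by $\Omega$. Since the reduced form of a closed form is closed, $\omega_S$ is closed. For the relative variety $\chi_{G,J}(F,\partial F)$ the boundary holonomy condition ``lands in $J$'' corresponds to reducing not at $0$ but at the coadjoint orbit determined by $J$ at each marked point, and one invokes the marked-surface versions of this reduction due to King--Sengupta \cite{MR1372813} and Biswas--Guruprasad \cite{MR1232983}; closedness of $\widehat\omega_F$ follows in the same way.

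Staying closer to the algebraic-topology spirit of this paper, one can instead follow Goldman \cite{Goldman} and Karshon \cite{MR1112494} and prove $d\widetilde\omega=0$ by a direct computation in group cohomology. Representing three tangent directions by cocycles $u,v,w$, extend them to vector fields on the smooth locus, apply the Cartan formula for $d$ of a $2$-form in terms of Lie derivatives and Lie brackets of these vector fields, and reduce the resulting expression, using the graded Leibniz rule for the twisted cup product and the cocycle identity, to the cyclic sum $\{[u,v]\cup w\}\cap\xi+\text{(cyclic)}$. This sum vanishes by the $\ad$-invariance of the pairing $\{\ ,\ \}$, which is exactly the infinitesimal form of the invariance used above. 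In the relative case the same computation acquires boundary contributions supported near $\partial F$, and these are killed precisely by the requirement that the boundary holonomies be constrained to $J$ (equivalently, that the cocycles restrict to the tangent space of $J$), so that $\widehat\omega_F$ descends to a closed form on $\widehat H^1(F;\frak{g}_{\ad\rho})$.

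The main obstacle is the direct verification of $d\widetilde\omega=0$ in the second approach: one must produce explicit vector-field extensions of the pointwise cocycles and control the chain-level homotopies implicit in passing between the cup-product formula and the Poincar\'e-dual intersection pairing, which is where the genuine work lies and where Karshon's argument is the careful reference. By contrast the reduction argument packages all of this into the single clean fact that reduced forms inherit closedness, at the cost of leaving the algebraic-topology framework of the rest of the paper; for the marked/relative case it additionally requires identifying the correct coadjoint-orbit value at which to reduce, which is the subtle point handled in \cite{MR1372813,MR1232983,MR1460627}.
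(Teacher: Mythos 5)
You should know that the paper does not prove Theorem \ref{AB} at all: it is stated as a quoted result, with the sentence immediately preceding it attributing it to Atiyah--Bott \cite{Atiyah_Bott} and the extensions by Goldman \cite{Goldman}, Karshon \cite{MR1112494}, Biswas--Guruprasad \cite{MR1232983}, King--Sengupta \cite{MR1372813}, and Guruprasad--Huebschmann--Jeffrey--Weinstein \cite{MR1460627}. Indeed the surrounding text emphasizes that the paper's own contribution (Corollary \ref{tm4} and the Lagrangian statements built on it) uses only Poincar\'e duality and is deliberately arranged so as \emph{not} to depend on closedness. Your proposal is therefore not in conflict with the paper; it is an expanded sketch of the proofs contained in precisely the references the paper cites, and as a sketch it is broadly faithful: the Atiyah--Bott route (the translation-invariant, hence closed, two-form on the affine space of connections, curvature as moment map, closedness inherited under reduction) is the standard argument for $\omega_S$, and Karshon's algebraic computation is the correct citation for a proof avoiding gauge theory. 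Two caveats if you intend this as a complete argument rather than a survey. First, for $\chi_{G,J}(F,\partial F)$ the constraint is a conjugacy class $J\subset G$, not a coadjoint orbit in $\frak{g}^*$, so ``reduce at the coadjoint orbit determined by $J$'' is only a heuristic; the honest formulation requires the extended-moduli-space or marked-surface formalism of \cite{MR1372813,MR1232983,MR1460627}. Second, two genuine steps are elided: in the reduction argument one must identify the reduced gauge-theoretic form with the cup-product form of Equation (\ref{cupprod}) (the de Rham/group-cohomology comparison), and in the Cartan-formula argument the vanishing of the cyclic sum is not a one-line consequence of $\ad$-invariance --- controlling the cochain-level extensions is exactly the work you acknowledge and defer to Karshon. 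Since both you and the paper ultimately rest on the same literature for these points, the net effect is the same: the theorem stands by citation, and your text functions as a correct roadmap to those citations rather than as an independent proof.
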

  
  In light of this result, Corollary \ref{tm4} can be restated as follows.
 
\begin{theorem}
 \label{flatthm}
Suppose that $\rho\in\chi_{G,J}(Y,C)$ is a regular
  point.
  Then there exists a neighborhood $U$ of $\rho$ in
  $\chi_{G,J}(Y,C)$ so that the restriction of $r$ to $U$, 
  $$r|_U :U\to \chi_{G,J}(F,\partial F),$$  is a  Lagrangian embedding.
  
    In particular, if $\chi_{G,J}(Y,C)$ contains only  regular points, then the restriction map is a Lagrangian immersion.
\end{theorem}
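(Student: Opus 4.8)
The plan is to deduce this statement by assembling three ingredients already in place: Corollary \ref{tm4}, Theorem \ref{AB}, and the differential-topological content packaged into the definition of a regular point in Section \ref{regpts}. No new geometry is required, since the substantive work has already been carried out upstream.

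First I would unpack the regular-point hypothesis. By definition, $\rho$ admits a neighborhood $U\subset\chi_{G,J}(Y,C)$ that is a smooth $n$-disk, on which the restriction $r$ is a smooth map, and such that $r$ carries every $\rho'\in U$ to a regular point of $\chi_{G,J}(F,\partial F)$. By Theorem \ref{AB}, that locus of regular points is a smooth manifold of dimension $2n=\dim T_\rho\chi_{G,J}(F,\partial F)$ carrying the closed, nondegenerate $2$-form $\widehat\omega_F$. Thus near $\rho$ the map $r|_U$ is a smooth map from a smooth $n$-manifold into a symplectic $2n$-manifold $(\,\cdot\,,\widehat\omega_F)$.

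Next I would show $r|_U$ is an isotropic immersion. By Corollary \ref{tm4}, after shrinking $U$, the image of $dr_{\rho'}$ at every point $\rho'\in U$ is a Lagrangian, hence $n$-dimensional isotropic, subspace of $(\widehat H^1(F;\frak{g}_{\ad\rho'}),\widehat\omega_F)$. Since $\dim U=n$, the differential $dr_{\rho'}$ has rank $n$ and is therefore injective at each point, so $r|_U$ is an immersion; and since each of these images is isotropic for $\widehat\omega_F$, the pullback $r|_U^{\,*}\widehat\omega_F$ vanishes identically. Together with the half-dimension count, this exhibits $r|_U$ as a Lagrangian immersion. To upgrade immersion to embedding I would invoke the standard fact that a smooth map with injective differential is locally an embedding: after a further shrinking of $U$, the map $r|_U$ is injective and a homeomorphism onto its image, giving the asserted Lagrangian embedding. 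The ``in particular'' clause then follows by ranging over all points: if every point of $\chi_{G,J}(Y,C)$ is regular, each has such a neighborhood, so $r$ is globally a Lagrangian immersion (global injectivity being exactly what one cannot expect in general).

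I do not anticipate a genuine obstacle at this stage. The delicate point---that the image of the differential is \emph{maximal} isotropic, not merely isotropic---was already handled in Corollary \ref{tm4}, which in turn rested on Poincar\'e--Lefschetz duality and the symplectic reduction of Corollary \ref{thepoint}. The only care needed here is the dimension bookkeeping and the verification, supplied jointly by the regular-point hypothesis and Theorem \ref{AB}, that the target of $r|_U$ genuinely lies in the smooth symplectic stratum on which $\widehat\omega_F$ is defined and closed.
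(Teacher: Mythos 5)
Your proposal is correct and follows exactly the paper's route: the paper gives no separate argument for Theorem \ref{flatthm}, presenting it as an immediate restatement of Corollary \ref{tm4} once Theorem \ref{AB} supplies closedness of $\widehat\omega_F$ and the regular-point definition supplies smoothness and the dimension count. You have merely written out the bookkeeping (Lagrangian image $\Rightarrow$ rank $n$ $\Rightarrow$ injective differential $\Rightarrow$ local embedding after shrinking $U$) that the paper leaves implicit.
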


Given a tangle $(X,\cL)$, We write $\chi_{G,J}(X,\cL)$ rather than 
 $\chi_{G,J}(Y,C)$ where $Y,S,C,F$ and $V$ are determined by $(X,\cL)$ as in Section \ref{tanglenot}. Also write $\chi_{G,J}(\partial(X,\cL))$ rather than $\chi_{G,J}(F, \partial F)$.   Then Theorem \ref{flatthm} can be restated in the new notation as follows.

\begin{corollary} \label{flatthmtangle}
Suppose that $\rho\in\chi_{G,J}(X,\cL)$ is a regular
  point.  Then there exists a neighborhood $U$ of $\rho$ in
  $\chi_{G,J}(X,\cL)$ so that the restriction of $r$ to $U$
  $$r|_U :U\to \chi_{G,J}(\partial(X,\cL)),$$ 
   is a  Lagrangian embedding.
  
  In particular, if $\chi_{G,J}(X,\cL)$ contains only regular points, then the restriction map is a Lagrangian immersion.
\end{corollary}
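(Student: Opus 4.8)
The plan is to recognize that this Corollary is nothing more than Theorem \ref{flatthm} rewritten through the dictionary of Section \ref{tanglenot}, so the entire task is to confirm that the dictionary is valid and that no hypothesis is silently lost in translation.

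First I would unwind the notational conventions introduced just before the Corollary. By definition, $\chi_{G,J}(X,\cL)$ abbreviates $\chi_{G,J}(Y,C)$ and $\chi_{G,J}(\partial(X,\cL))$ abbreviates $\chi_{G,J}(F,\partial F)$, where $(Y,S,C)$ together with the induced annular neighborhood $V$ and complementary surface $F$ are the data produced from the tangle by $Y=X\setminus N(\cL)$, $S=\partial Y$, and $C$ the union of the $m=n+k$ meridians, one for each component of $\cL$. Under these identifications the map $r$ appearing in the Corollary is literally the restriction map $r$ of Theorem \ref{flatthm}, the class $\rho$ and the notion of regular point transport verbatim, and the two asserted conclusions---that $r|_U$ is a Lagrangian embedding near a regular point, and that $r$ is a Lagrangian immersion when every point of the source is regular---are precisely the two conclusions of Theorem \ref{flatthm}.

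The substance, such as it is, lies in checking that the triple $(Y,S,C)$ built from a tangle genuinely satisfies the standing hypotheses under which Theorem \ref{flatthm} was established. I would verify that $Y$ is a compact, connected, oriented $3$-manifold with $\partial Y=S$ a closed oriented surface: compactness and orientability are inherited from $X$, and connectedness holds because deleting a tubular neighborhood of a properly embedded compact $1$-manifold from a connected $3$-manifold cannot disconnect it. I would also check that $C$ is an embedded collection of circles with annular tubular neighborhood $V$---indeed each interval component contributes a meridian on an annulus $S^1\times I\subset\partial Y$ and each circle component a meridian on a boundary torus---and that the traceless condition $J$ is inversion-invariant, as it is for the traceless conjugacy class in $SU(2)$, so that the relative character variety is well defined. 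The one genuinely separate point, and hence the main (though modest) obstacle, is the treatment of the closed components of $\cL$: since such a $C_i$ is a meridian of a solid-torus neighborhood rather than of an honest strand, I would invoke the observation of Section \ref{tanglenot} that removing $k$ small balls, each meeting one closed component in a trivial arc, produces a tangle with no closed components and the same character variety and restriction map, reducing the statement to Theorem \ref{flatthm} without altering $\chi_{G,J}(X,\cL)$.
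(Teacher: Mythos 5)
Your proposal is correct and matches the paper exactly: the paper offers no separate argument for Corollary \ref{flatthmtangle}, presenting it as Theorem \ref{flatthm} rewritten via the dictionary $(X,\cL)\Rightarrow(Y,S,C)$ of Section \ref{tanglenot}, which is precisely your route. Your only deviation is treating the closed components of $\cL$ as an obstacle requiring the ball-removal observation; this is harmless but unnecessary, since Theorem \ref{flatthm} applies verbatim to the triple $(Y,C)$ produced by any tangle, closed components included.
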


  In what follows,   we simply write $\chi(A)$ for $\chi_G(A)$ and $\chi(A,B)$ for $\chi_{G,J}(A,B)$.
In addition, we denote by $\chi(S)^*\subset \chi(S)$ and $\chi(F,\partial F)^*\subset \chi(F,\partial F)$ the {\em smooth top strata}, as real algebraic varieties, equipped with the symplectic forms $\omega_S$, $\widehat \omega_F$. 

\section{Three pillowcases}\label{pillowcase}
In this section, take $G=SU(2)$, and let $J\subset SU(2)$ be the conjugacy class of unit quaternions
with zero real part, so $J=su(2)\cap SU(2)$, a 2-sphere.
\subsection{The quotient of the torus by the elliptic involution}
Let $\mathbb{T}$ denote $U(1)\times U(1)$ with its symplectic form $dx \wedge dy$ and symplectic $SL(2,\ZZ)$ action given in Equation \ref{SL2Z}. 
Multiplication by $-1\in SL(2,\ZZ)$ is central and hence the quotient 
$$\mathbb{P}=\mathbb{T}/\{\pm 1\}$$
inherits a $PSL(2,\ZZ)=SL(2,\ZZ)/\{\pm1\}$ action.  The quotient map $\mathbb{T}\to \mathbb{P}$ is the 2-fold branched cover of the 2-sphere with branch points the four points $\{(\pm 1, \pm 1)\}$.  The complement, $\mathbb{P}^*$  of the four branch points  is a smooth surface, with symplectic form $dx\wedge dy$ and symplectic 
$PSL(2,\ZZ)$ action, and the restriction 
$$\mathbb{T}^*\to\mathbb{P^*}$$
is a smooth symplectic 2-fold covering map.

 \subsection{The   $SU(2)$ character variety of the genus 1 surface} 
 

Consider the genus one closed oriented surface $T$, equipped with generators $\mu,\lambda\in\pi_1T$ represented by a pair of oriented loops intersecting geometrically and algebraically once. The Dehn twists $D_\mu,D_\lambda$ about $\mu$ and $\lambda$
induce the automorphisms 
$$
D_\mu: \big( \mu\mapsto\mu, \lambda\mapsto \mu \lambda\big) \text{ and }
D_\lambda:\big( \mu\mapsto\mu \lambda^{-1}, \lambda\mapsto  \lambda\big) .
$$
of $\pi_1(T,t_0)\cong \ZZ  \mu\oplus\ZZ\lambda$ (where $t_0$ lies outside the support of these 2 Dehn twists).  These automorphisms induce, by precomposition,  homeomorphisms
$$D_\mu^*,D_\lambda^*:\chi(T)\to \chi(T).$$

Let $\mathcal{H} (T)=\Hom(\pi_1(T), SU(2))$.
Denote by $$p:\mathcal{H} (T)\to \chi (T)$$ the (surjective) orbit map of the conjugation action.

 Define 
\begin{equation}\label{tro}\rho:\mathbb{T}\to \mathcal{H}(T), ~ \rho(e^{x\bbi}, e^{y\bbi})= \big ( \mu\mapsto e^{x\bbi}, ~ \lambda\mapsto e^{y\bbi}\big).\end{equation}

\begin{proposition}\label{pillow1}  The Dehn twists $D_\mu$ and $D_\lambda$ define a symplectic $PSL(2,\ZZ)$ action on $(\chi(T),\omega_T)$.
 The
 map $\rho$ of Equation (\ref{tro}) descends to a $PSL(2,\ZZ)$ equivariant homeomorphism
 $$\rho:\mathbb{P}\to \chi(T)$$
 which restricts to  a $PSL(2,\ZZ)$ equivariant  symplectomorphism
 $$ (\mathbb{P}^*,c~ dx\wedge dy)\to (\chi(T)^*,\omega_T) $$
 on the top strata of the $SU(2)$ character varieties.
\end{proposition}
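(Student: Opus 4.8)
The plan is to prove the homeomorphism statement first by elementary $SU(2)$ representation theory, and then to upgrade it to a symplectomorphism by a direct cohomological computation of the Atiyah--Bott--Goldman form at a diagonal representation. Since $\pi_1(T)=\ZZ\mu\oplus\ZZ\lambda$ is abelian, $\mathcal{H}(T)$ consists of commuting pairs in $SU(2)$, and any two commuting elements lie in a common maximal torus; conjugating into $U(1)=\{e^{t\bbi}\}$ shows $p\circ\rho$ is surjective. Conjugation by $\bbj$ realizes the nontrivial Weyl element, sending $e^{t\bbi}\mapsto e^{-t\bbi}$, so $\rho(e^{x\bbi},e^{y\bbi})$ and $\rho(e^{-x\bbi},e^{-y\bbi})$ are conjugate and $p\circ\rho$ factors through $\mathbb{P}=\mathbb{T}/\iota$. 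Conversely, a noncentral diagonal pair has centralizer exactly $U(1)$, so any conjugacy between two diagonal pairs is realized by the normalizer $N(U(1))$, i.e.\ up to the simultaneous Weyl action $\iota$, while the central pairs $(\pm1,\pm1)$ are each their own conjugacy class. Hence the induced map $\mathbb{P}\to\chi(T)$ is a continuous bijection, and a homeomorphism because $\mathbb{P}$ is compact and $\chi(T)$ Hausdorff; the four fixed points of $\iota$ correspond to the central representations, which form the singular locus, so $\rho$ carries $\mathbb{P}^*$ onto $\chi(T)^*$.

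Next I would check equivariance and assemble the group action. Substituting into the formulas for $D_\mu,D_\lambda$ gives $D_\mu^*\circ\rho(e^{x\bbi},e^{y\bbi})=\rho(e^{x\bbi},e^{(x+y)\bbi})$ and $D_\lambda^*\circ\rho(e^{x\bbi},e^{y\bbi})=\rho(e^{(x-y)\bbi},e^{y\bbi})$, matching the $SL(2,\ZZ)$ matrices $\left(\begin{smallmatrix}1&0\\1&1\end{smallmatrix}\right)$ and $\left(\begin{smallmatrix}1&-1\\0&1\end{smallmatrix}\right)$ acting via Equation (\ref{SL2Z}). These generate $SL(2,\ZZ)$, and the central $-1$ acts on $\pi_1(T)$ by inversion, hence trivially on $\chi(T)$ by the same Weyl element $\bbj$; thus the Dehn twists generate a $PSL(2,\ZZ)$ action for which $\rho$ is equivariant.

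Finally I would compute $\rho^*\omega_T$ on $\mathbb{T}^*$ and descend to $\mathbb{P}^*$. At a noncentral diagonal $\rho$, write $\frak{g}=\RR\bbi\oplus W$ with $W=\RR\bbj\oplus\RR\bbk$; the adjoint action fixes $\RR\bbi$ and rotates $W$ through angle $2x$ under $\mu$ and $2y$ under $\lambda$. Since $(e^{2x\bbi},e^{2y\bbi})\neq(1,1)$ off the branch locus, the local system $W$ has no invariants, whence $H^1(T;W)=0$ by vanishing of the Euler characteristic of $T$ together with the Poincar\'e duality isomorphism $H^0\cong H^2$; therefore $H^1(T;\frak{g})=H^1(T;\RR)\otimes\bbi$ is two--dimensional. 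By Weil's cocycle description (Equation (\ref{FTS})), $\partial_x$ and $\partial_y$ correspond to the homomorphisms $\mu^*\otimes\bbi$ and $\lambda^*\otimes\bbi$, so the cup-product form (Equation (\ref{cupprod})) yields $\omega_T(\partial_x,\partial_y)=\{\bbi,\bbi\}\,\big((\mu^*\cup\lambda^*)\cap[T]\big)=:c$, a nonzero constant independent of $(x,y)$. Hence $\rho^*\omega_T=c\,dx\wedge dy$, and since both $\omega_T$ and $dx\wedge dy$ are invariant under the respective $SL(2,\ZZ)$ actions, $\rho$ restricts to a $PSL(2,\ZZ)$-equivariant symplectomorphism on the top strata, which simultaneously shows the Dehn-twist action is symplectic.

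The hard part will be the last paragraph: one must verify that the entire first cohomology $H^1(T;\frak{g}_{\ad\rho})$ at a noncentral diagonal representation is carried by the Cartan direction $\RR\bbi$ (that is, $H^1(T;W)=0$), so that Weil's identification reduces the Atiyah--Bott form to the ordinary intersection pairing on $H^1(T;\RR)$, and one must confirm that the resulting constant $c$ is genuinely the same at every point of $\mathbb{P}^*$, so that the pullback form is exactly $c\,dx\wedge dy$ rather than a varying multiple.
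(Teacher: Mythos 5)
Your proposal is correct and follows essentially the same route as the paper's proof: Weil's identification of tangent vectors with $su(2)$-valued cocycles, the invariant splitting $su(2)=\RR\bbi\oplus(\RR\bbj+\RR\bbk)$ reducing $\omega_T$ to the ordinary intersection pairing on $H^1(T;\RR)$ (so the pullback is a constant multiple of $dx\wedge dy$), and the explicit identification of the Dehn-twist action with matrices generating $PSL(2,\ZZ)$. The only deviations are that you supply details the paper cites as well known (the representation-theoretic argument for the homeomorphism, and the Euler-characteristic/Poincar\'e-duality proof that $H^1(T;W)=0$, which the paper instead deduces from the known local-diffeomorphism statement), and your matrices $\left(\begin{smallmatrix}1&0\\1&1\end{smallmatrix}\right)$, $\left(\begin{smallmatrix}1&-1\\0&1\end{smallmatrix}\right)$ are the transposes of those printed in the paper --- yours are the ones consistent with the convention of Equation (\ref{SL2Z}), and in any case either pair generates $PSL(2,\ZZ)$, so the conclusion is unaffected.
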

\begin{proof}
It is well known that $\rho:\mathbb{P}\to\chi(T)$ is a well defined homeomorphism, as well as an analytic diffeomorphism of the smooth strata $\mathbb{P}^*\to \chi(T)^*$. This follows simply from the observations that the fundamental group of $T$ is abelian   and the Weyl group of $SU(2)$  is $\ZZ/2$.

We show that   $\rho:(\mathbb{P}^*,dx\wedge dy)\to (\chi(T)^*,\omega_T)$ is a symplectomorphism. Since this is a local statement, we work in $\RR^2$ for simplicity.
Define $m=\rho\circ e:\RR^2\to \chi(T)$, where $e(x,y)=(e^{x\bbi}, e^{y\bbi})$.

The differential  of $m$ at $(x,y)\in \RR^2$, $dm:T_{(x,y)}\RR^2\to T_{m(x,y)}(SU(2)\times SU(2))$ is given by
$$dm(\tfrac{\partial}{\partial x})=\big( \mu\mapsto e^{x\bbi}\bbi, \lambda\mapsto 0\big), dm(\tfrac{\partial}{\partial y})=\big( \mu\mapsto 0, \lambda\mapsto e^{y\bbi}\bbi\big).
$$
Following Weil, left translation in $SU(2)\times SU(2)$ identifies these with the $ su(2)$-valued 1-cochains
 $$z_x=(\mu\mapsto \bbi, \lambda\mapsto 0), ~
z_y=(\mu\mapsto 0, \lambda\mapsto \bbi)\in C^1(T;su(2)_{\ad m(x,y)}).$$

The subspaces $L=\bbi\RR$ and $V=\bbj \RR +\bbk \RR$ are invariant and complementary with respect to $\ad m(x,y)$, and therefore
$$H^1(T;su(2)_{\ad m(x,y)})=H^1(T; L_{\ad m(x,y)})\oplus H^1(T; V_{\ad m(x,y)}).$$
Note that the action $\ad m(x,y)$ on $L$ is trivial, since $L=\bbi \RR$ and $m(x,y)$ has values in the abelian subgroup $\{e^{\bbi u}\}$. Hence 
$$H^1(T; L_{\ad m(x,y)})\cong H^1(T; \RR)\cong\RR^2.$$
The branched cover $ m:\RR^2\to \chi(T)$  is a local diffeomorphism near any $(x,y)\in (\RR^2)^*=\RR^2\setminus (\pi\ZZ)^2$, and hence it follows that for such $(x, y)$, 
$$H^1(T;\RR)\otimes \RR\bbi=H^1(T; L_{\ad m(x,y)})=\Span\{ z_x,z_y\}\text{ and } H^1(T; V_{\ad m(x,y)})=0$$
(these calculations can  also be easily checked directly), so that 
$$T_{m(x,y)}(\chi (T))=H^1(T;\RR)\otimes \RR\bbi.$$

The cup product
$$H^1(T; L_{\ad m(x,y)})\times H^1(T; L_{\ad m(x,y)})\to H^2(T; \RR)$$
is thereby identified with  
$$H^1(T;\RR)\otimes\RR\bbi \times H^1(T;\RR)\otimes\RR\bbi\to H^2(T; \RR)$$
$$ (z_1\otimes  \bbi,z_2\otimes  \bbi)=(z_1\cup z_2)\cdot (-\Rea( \bbi  \bbi))=z_1\cup z_2.$$
In particular, $z_x=\mu^*\otimes \bbi $ and $z_y=\lambda^*\otimes \bbi$, where $\mu^*,\lambda^*\in H^1(T;\ZZ)=\Hom(H_1(T;\ZZ),\ZZ)$ is the basis dual to $\mu,\lambda$. This basis is symplectic with respect to the (usual, untwisted) intersection form.  Hence
$$
\big((p\circ m)^*(\omega_T)\big)|_{(x,y)}(\tfrac{\partial}{\partial x} , \tfrac{\partial}{\partial y}) =\omega_T(z_x,z_y)= 
(\mu^*\cup\lambda^*) \cap [T]=1=   dx\wedge dy (\tfrac{\partial}{\partial x} , \tfrac{\partial}{\partial y})=1.$$

\medskip

Naturality of cup products shows that  $D_\mu^*$, $D_\lambda^*$ preserve the symplectic form $\omega_T$. 
Next,  
 $$D^*_\mu(m(x,y))=m(x,y)\circ D_\mu=\big( \mu\mapsto m(x,y)(\mu)=e^{x\bbi}, \lambda\mapsto m(x,y)(\mu\lambda)= e^{(x+y)\bbi}\big)
$$ 
and similarly
 $$D^*_\lambda(m(x,y))=m(x,y)\circ D_\lambda=\big( \mu\mapsto  e^{(x-y)\bbi}, \lambda\mapsto  e^{y\bbi}\big)
$$ 

It follows that the subgroup of Homeo$(\chi(T))$ generated by $D^*_\mu$ and $D^*_\lambda$ pulls back, via the homeomorphism $\rho:\mathbb{P}\to \chi(T)$, to the subgroup of $PSL(2,\ZZ)$ generated by 
\begin{equation*} 
\rho^*(D^*_\mu)=\begin{pmatrix} 1&1\\ 0&1 \end{pmatrix},~
\rho^*(D^*_\lambda)=\begin{pmatrix} 1&0\\-1&1 \end{pmatrix}.
\end{equation*}
These two matrices generate $PSL(2,\ZZ)$ (\cite{MR607504}), finishing the proof.
 \end{proof}

\color{black}

\subsubsection{The solid torus and the restriction to its boundary}\label{solid torus}  
Let $X$ denote the solid torus with boundary $T$. Equip $T$ with based loops $\mu,\lambda$ generating $\pi_1(T)$, so that $\mu$ is trivial in $\pi_1(X)$  and $\lambda$ generates $\pi_1(X)$. Then
 $$\chi(X) =\chi(\ZZ\lambda)=SU(2)/_{\rm conjugation}.$$ 

    An explicit slice of the conjugation action $\Hom(\ZZ\lambda,SU(2))\to \chi(\ZZ\lambda)$
 is given by the map 
 \begin{equation}
\label{arc}
[0,\pi]\to \Hom(\ZZ\lambda ,SU(2)),~ s\mapsto  (\lambda\mapsto e^{\bbi s})\end{equation}
with composition $[0,\pi]\to \chi(X)\xrightarrow{\Rea_\lambda}[-1,1]$ equal to the analytic isomorphism $\cos(s)$.  
Simple cohomology calculations show
 $\dim H^1(\ZZ;su(2))$  equals 1 when $0<s<\pi$ and equals $3$ when $s=0$ or $\pi$. This shows that 
the interior of the interval forms the smooth top stratum  of $\chi(\ZZ)$, and the endpoints are singular.
 
 \medskip

  Since $\mu=1\in \pi_1(X)$, the restriction-to-the boundary map
 \begin{equation}
\label{eq4.5}\chi(X)\to \chi(T)
\end{equation}
 is  is easily computed, in $\mathbb{P}$, to be the smooth (necessarily Lagrangian) embedded arc given by:
  \begin{equation}
\label{eq4.6}[0,\pi]\ni s \mapsto [e^{s\bbi},1]\in  \mathbb{P}
\end{equation}
with endpoints at $[-1,1]$ and $[1,1]$.

\subsection{The traceless $SU(2)$ character variety of the 4-punctured 2-sphere}

Let $4D^2\subset S^2$ be four disjoint open disks, and set
$$F=S^2\setminus 4 D^2, ~ \partial F= 4 S^1.$$
Then $\pi_1(F)$ has presentation
$$\pi_1(F)=\langle a,b,c,d ~|~ abcd=1\rangle$$
and is free on $a,b,c$.
Set 
$$\mathcal{H}(S^2,4)=\{\rho\in\Hom(\pi_1(F),SU(2))~|~ \rho(a),\rho(b),\rho(c),\rho(d)\in J\}.$$   Let
$p:\mathcal{H} (S^2,4)\to \chi (S^2,4)$ denote the orbit map of the conjugation action.

 Define 
\begin{equation}\label{trho2} {\hat \rho}:\mathbb{T}\to \mathcal{H} (S^2,4), ~ (e^{x\bbi}, e^{y\bbi})\mapsto \big ( a\mapsto \bbj, ~ b\mapsto e^{x\bbi}\bbj,~ c\mapsto e^{y\bbi}\bbj\big).\end{equation}

A half Dehn twist of $(D^2, p,q)$, where $p,q$ are a fixed pair of interior points, is a homeomorphism of the disk which fixes the boundary and permutes $p$ and $q$, and which generates the infinite cyclic mapping class group rel boundary of a disk with two marked interior points.
The generator which veers to the right is called a positive half Dehn twist.

\subsubsection{Proof of Theorem \ref{PC=PC2}}
We prove the Theorem \ref{PC=PC2} of  the introduction.

\medskip

\begin{proof}

That $\hat\rho:\mathbb{P}\to\chi(S^2,4)$ is a well defined homeomorphism, as well as an analytic diffeomorphism of the smooth strata $\mathbb{P}^*\to \chi(S^2,4)^*$,  is simple;
its proof can be found in \cite{Lin, HeuKro}.

We show that $\hat\rho:(\mathbb{P}^*,c~ dx\wedge dy)\to (\chi(S^2,4)^*,\widehat \omega_{(S^2,4)})$ is a symplectomorphism, for some constant $c$.  Since this is a local statement, we work in $\RR^2$ for simplicity.
Define $\hat m=\hat\rho\circ e:\RR^2\to \chi(S^2,4)$, where $e(x,y)=(e^{x\bbi}, e^{y\bbi})$.

 The differential  of $\hat m$ at $(x,y)\in \RR^2$ is given by
$$d\hat m(\tfrac{\partial}{\partial x})=\big( a\mapsto 0, b\mapsto e^{x\bbi}\bbi\bbj,c\mapsto 0\big), d\hat m(\tfrac{\partial}{\partial y})=\big( a\mapsto  0, b\mapsto 0,c\mapsto  e^{y\bbi}\bbi\bbj\big).
$$
Translation in $SU(2)^3$ identifies these with the $ su(2)$-valued 1-cochains
 $$z_x=\big( a\mapsto 0, b\mapsto -\bbi,c\mapsto 0\big), z_y=\big( a\mapsto  0, b\mapsto 0,c\mapsto  -\bbi\big).$$
 
 Since $\hat m$ is a local diffeomorphism away from $(\pi\ZZ)^2$ \cite{HHK1}, and 
 $\tfrac{\partial}{\partial x},\tfrac{\partial}{\partial y}$ span $T_{(x,y)}\RR^2$,
 the cohomology classes $[z_x],~[z_y]$ span 
 $$\widehat H^1(S^2-4D^2; su(2)_{\ad \hat m(x,y)})=T_{[\hat m(x,y)]}(\chi (S^2,4)).$$
 
For any $(x,y)\in \RR^2\setminus(\pi\ZZ)^2$, the adjoint action 
$\ad \hat m(x,y):\pi_1(S^2-4D^2)\to GL(su(2))$ reduces as the direct sum 
 $$\ad \hat m(x,y)=\ad \hat m(x,y)_1\oplus\ad \hat m(x,y)_2 :\pi_1(S^2-4D^2)\to  GL(\RR\bbi)\times GL(\CC\bbj)
 $$
and hence
$$\widehat H^1(S^2-4D^2; su(2)_{\ad \hat m(x,y)})
=\widehat H^1(S^2-4D^2; \RR\bbi_{{\ad \hat m(x,y)}_1})\oplus
\widehat H^1(S^2-4D^2; \CC\bbj_{{\ad \hat m(x,y)}_2}).
$$
Since $\RR\bbi$ and $\CC\bbj$ are orthogonal, the symplectic form $\widehat \omega_{S^2,4}$ splits orthogonally 
$$\widehat \omega_{S^2,4}=\widehat \omega_{S^2,4}^1\oplus \widehat \omega_{S^2,4}^2.$$

The cocyles $z_x$ and $z_y$ lie in the first summand, and hence they span the first summand and the second summand is zero (these two facts can also be easily calculated directly). Hence
$\widehat \omega_{S^2,4}=\widehat \omega_{S^2,4}^1.$

The representation on the first summand independent of $(x,y)$: indeed $a,b,$ and $c$ (and hence also $d$) act by $-1$ for all $x,y$. The cocycles $z_x, z_y$ are   independent of $x,y$, and hence
$$
 \hat m^*(\widehat \omega_{S^2,4}) |_{(x,y)}(\tfrac{\partial}{\partial x} , \tfrac{\partial}{\partial y}) =\widehat \omega^1_{S^2,4}(z_x,z_y)
=c dx\wedge dy (\tfrac{\partial}{\partial x} , \tfrac{\partial}{\partial y})$$
for a non-zero constant $c$ (since $z_x,z_y$ span).

\medskip

The half-Dehn twists along the disks $H_1$ and $H_2$ illustrated in Figure \ref{halffig}   induce automorphisms
of $\pi_1(S^2\setminus 4, s_0)$, for a base point chosen outside the supports of $H_1$ and $H_2$, as indicated in the figure. These automorphisms are given by
$$H_1: a\mapsto a, b\mapsto b, c\mapsto d=\bar c\bar b\bar a$$
$$H_2: a\mapsto a, b\mapsto cd\bar c=\bar b \bar a\bar c, c\mapsto c$$
and induce homeomorphisms $H_1^*,H_2^*:\chi(S^2, 4)\to \chi(S^2,4)$.
Naturality of cup products shows that  $H_1^*$, $H_2^*$ preserve the symplectic form $\widehat \omega_{(S^2,4)}$.

Next:
\begin{align*}\label{eq4.7}H_1^*(m(x,y))&=\big(a\mapsto m(x,y)(a)=\bbj, b\mapsto m(x,y)(b)= e^{x\bbi}, c\mapsto m(x,y)(\bar c\bar b\bar a)=e^{(y-x)\bbi}\bbj\big)\\
&=m(x, y-x)
\end{align*}
and similarly
$$H_2^*(m(x,y)) =\big(a\mapsto \bbj, b\mapsto m(x,y)(\bar b \bar a \bar c)=e^{(x+y)\bbi}\bbj , c\mapsto e^{y\bbi})=m(x+y,y).
$$ 

It follows that the subgroup of Homeo$(\chi(S^2,4))$ generated by $H_1^*$ and $H_2^*$ pulls back, via the homeomorphism $\hat \rho:\mathbb{P}\to \chi(T)$, to the subgroup of $PSL(2,\ZZ)$ generated by 
\begin{equation*} 
\hat\rho^*(H_1^*)=\begin{pmatrix} 1&-1\\ 0&1 \end{pmatrix},~
\hat\rho^*(H_2^*)=\begin{pmatrix} 1&0\\1&1 \end{pmatrix}.
\end{equation*}
These two matrices generate $PSL(2,\ZZ)$ (\cite{MR607504}), finishing the proof.
 \end{proof}

Mapping classes   of $(S^2,4)$  permute the four punctures. The subgroup 
of the mapping class group of $(S^2,4)$ which fixes the point labelled $a$ in Figure \ref{halffig}
can be shown to act on $\chi(S^2,4)$; this subgroup is isomorphic to $PSL(2,\ZZ)$ (see, e.g., \cite{MR2850125}), generated by these half twists.

\color{black}

\section{Perturbations}
The holonomy perturbation process is easily understood, as well as motivated,  in the 
language of Weinstein composition of Lagrangian immersions.

\subsection{Composition}
Given any two (set) maps 
$$\alpha:A\to M \text{ and }\beta=\beta_M\times \beta_N:B\to M\times N,$$ 
define the {\em composition} $(A\times_M B,\beta^\alpha_N)$ by
\begin{equation}\label{eq4.1} A\times_M B:=\{ (a,b)\in A \times B ~|~\alpha(a)=\beta_M(b)\}=(\alpha\times \beta_M)^{-1}(\Delta_M)
\end{equation}
and
\begin{equation}
\beta^\alpha_N:A\times_M B\to N, ~ \beta^\alpha_N(a,b):=\beta_N(b).
\end{equation}
\subsubsection{Composition in character varieties}

Recall that if $\rho:\Gamma\to G$ is a homomorphism, 
$$Stab(\rho)=\{g\in G~|~g\rho(\gamma)g^{-1}=\rho(\gamma) \text{ for all } \gamma\in \Gamma\}.$$
The proof of the following simple lemma  can be found in \cite[Lemma 4.2]{HHK1}.
 
\begin{lemma}\label{cvsvk} Suppose that $\Gamma_0,\Gamma_1$, $H$ are groups,  
$h_0:H\to \Gamma_0$, $h_1:H\to \Gamma_1$ homomorphisms. Set $\Gamma=\Gamma_0*_H \Gamma_1$,  the pushout   along $h_0,h_1$. Then there is a surjection
$$\chi(\Gamma)\to \chi(\Gamma_0)\times_{\chi(H)}\chi(\Gamma_1)$$
with fiber over $([\rho_0],[\rho_1])$ the double coset
$$Stab(\rho_0)\backslash Stab(\rho_0|_H)/Stab(\rho_1).$$
\end{lemma}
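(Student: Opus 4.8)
The plan is to construct the asserted surjection directly from the van Kampen theorem, which identifies $\pi_1$ of a space glued along a subspace with the pushout of fundamental groups, and then to track what happens to representations and conjugacy classes. First I would set up the map $\chi(\Gamma) \to \chi(\Gamma_0) \times \chi(\Gamma_1)$ sending the class of a representation $\rho:\Gamma \to G$ to the pair of classes of its restrictions $\rho_0 = \rho|_{\Gamma_0}$ and $\rho_1 = \rho|_{\Gamma_1}$ (using the canonical maps $\Gamma_i \to \Gamma$ from the pushout). Because $\rho_0$ and $\rho_1$ agree on the image of $H$ in $\Gamma$ — both restrict to $\rho \circ (\text{incl}) \circ h_i$ up to the pushout relations — their images in $\chi(H)$ coincide, so this map lands in the fiber product $\chi(\Gamma_0)\times_{\chi(H)}\chi(\Gamma_1)$. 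This gives the map whose surjectivity and fibers we must analyze.

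Next I would prove surjectivity. Suppose $([\rho_0],[\rho_1])$ is a point of the fiber product, so $[\rho_0|_H] = [\rho_1|_H]$ in $\chi(H)$, meaning there is some $g \in G$ with $g\,(\rho_0\circ h_0)\,g^{-1} = \rho_1 \circ h_1$ as homomorphisms $H \to G$. After replacing $\rho_0$ by its conjugate by $g$ (which does not change $[\rho_0]$), I may assume $\rho_0 \circ h_0 = \rho_1 \circ h_1$ outright. The universal property of the pushout $\Gamma = \Gamma_0 *_H \Gamma_1$ then produces a unique homomorphism $\rho:\Gamma \to G$ restricting to $\rho_0$ and $\rho_1$, whose class maps to $([\rho_0],[\rho_1])$. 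This shows the map is onto.

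Finally I would compute the fiber over $([\rho_0],[\rho_1])$. Fix representatives with $\rho_0 \circ h_0 = \rho_1 \circ h_1$ as above. Any other preimage comes from a pair $(g_0 \rho_0 g_0^{-1},\, g_1 \rho_1 g_1^{-1})$ that still glues to a homomorphism on $\Gamma$, which requires the restrictions to $H$ to agree: $g_0 (\rho_0\circ h_0) g_0^{-1} = g_1 (\rho_1 \circ h_1) g_1^{-1}$, i.e. $g_1^{-1} g_0 \in \Stab(\rho_0|_H) = \Stab(\rho_1|_H)$. Two such gluings give the same point of $\chi(\Gamma)$ exactly when they differ by simultaneous conjugation, which absorbs the ambiguities $g_0 \mapsto \Stab(\rho_0)\, g_0$ and $g_1 \mapsto \Stab(\rho_1)\, g_1$. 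Organizing these relations shows the fiber is the double coset
\[
\Stab(\rho_0)\,\backslash\,\Stab(\rho_0|_H)\,/\,\Stab(\rho_1),
\]
where $\Stab(\rho_0|_H) = \Stab(\rho_1|_H)$ under the identification. The main obstacle is bookkeeping: carefully distinguishing the overall conjugation that identifies points of $\chi(\Gamma)$ from the independent conjugations acting on $\rho_0$ and $\rho_1$, and verifying that the residual freedom is precisely captured by the two one-sided stabilizers quotienting the middle stabilizer. Since the statement cites \cite[Lemma 4.2]{HHK1}, I would keep this argument brief and defer the detailed double-coset verification to that reference.
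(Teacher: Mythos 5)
Your proposal is correct, and it is essentially the argument the paper implicitly relies on: the paper itself gives no proof of this lemma, deferring entirely to \cite[Lemma 4.2]{HHK1}, and your construction (restrict along the canonical maps $\Gamma_i\to\Gamma$, use the pushout's universal property after conjugating one representative so that $\rho_0\circ h_0=\rho_1\circ h_1$, then account for the gluing ambiguity by stabilizers) is the standard proof found there. One bookkeeping caution: if you fix $\rho_1$ and let the gluing parameter be $s=g_1^{-1}g_0\in \Stab(\rho_0|_H)$, the equivalence you get is $s\sim b^{-1}sa$ with $a\in\Stab(\rho_0)$, $b\in\Stab(\rho_1)$, i.e.\ the fiber naturally appears as $\Stab(\rho_1)\backslash \Stab(\rho_0|_H)/\Stab(\rho_1)$-style with the factors on the opposite sides from the statement; this is harmless, since $s\mapsto s^{-1}$ gives a canonical bijection $\Stab(\rho_1)\backslash \Stab(\rho_0|_H)/\Stab(\rho_0)\cong \Stab(\rho_0)\backslash \Stab(\rho_0|_H)/\Stab(\rho_1)$, but your phrase ``$g_0\mapsto \Stab(\rho_0)\,g_0$'' has the stabilizer acting on the wrong side (the ambiguity in $g_0$ is right multiplication, $g_0\mapsto g_0a$), which is exactly the kind of slip that scrambles the two conventions.
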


The fibers $Stab(\rho_0)\backslash Stab(\rho_0|_H)/Stab(\rho_1)$
are called  {\em gluing parameters}.

\begin{proposition} \label{fiber} If $Z=Z_0\cup_{\Sigma}Z_1$ is a decomposition  of a compact 3-manifold along a closed separating surface $\Sigma$,  with $\pi_1(\Sigma)\to \pi_1(Y_0)$ surjective,   then  $\chi(Z)\to \chi(Z_0)\times_{\chi(\Sigma)}\chi(Z_1)$  is a homeomorphism, in fact, an algebraic isomorphism.
\end{proposition}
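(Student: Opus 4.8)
The plan is to apply Lemma \ref{cvsvk} to the van Kampen decomposition $\pi_1(Z) = \pi_1(Z_0) *_{\pi_1(\Sigma)} \pi_1(Z_1)$ and then show that, under the surjectivity hypothesis, all the gluing parameters are trivial, so that the canonical surjection of the lemma is actually a bijection. Concretely, I would first invoke van Kampen's theorem to identify $\pi_1(Z)$ as the pushout of $\pi_1(Z_0) \leftarrow \pi_1(\Sigma) \to \pi_1(Z_1)$ (this needs $\Sigma$ connected and the basepoint on $\Sigma$; the separating and closed hypotheses guarantee we are in the genuine amalgamated-product case rather than an HNN extension). Lemma \ref{cvsvk} then immediately gives a surjection
$$\chi(Z) \to \chi(Z_0) \times_{\chi(\Sigma)} \chi(Z_1)$$
whose fiber over $([\rho_0],[\rho_1])$ is the double coset $\Stab(\rho_0)\backslash \Stab(\rho_0|_{\pi_1(\Sigma)})/\Stab(\rho_1)$.

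The key step is to collapse these fibers to points. Here I would use the surjectivity of $\pi_1(\Sigma) \to \pi_1(Z_0)$: this forces $\rho_0$ and $\rho_0|_{\pi_1(\Sigma)}$ to have the \emph{same} image in $G$, hence the same stabilizer, so $\Stab(\rho_0) = \Stab(\rho_0|_{\pi_1(\Sigma)})$. Consequently the double coset
$$\Stab(\rho_0)\backslash \Stab(\rho_0|_{\pi_1(\Sigma)})/\Stab(\rho_1) = \Stab(\rho_0|_{\pi_1(\Sigma)})\backslash \Stab(\rho_0|_{\pi_1(\Sigma)})/\Stab(\rho_1)$$
is a single point, since the left factor already exhausts the middle group. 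This shows the surjection is a bijection of sets.

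To upgrade from a set-theoretic bijection to an algebraic isomorphism, I would argue that the map $\chi(Z) \to \chi(Z_0)\times_{\chi(\Sigma)}\chi(Z_1)$ is a morphism of real semi-algebraic sets that is defined by the natural restriction maps (which are algebraic, being induced by the inclusions $\pi_1(Z_i) \hookrightarrow \pi_1(Z)$ on the character variety level), and that its inverse is likewise algebraic. Since $\pi_1(Z)$ is generated by the images of $\pi_1(Z_0)$ and $\pi_1(Z_1)$ (again by van Kampen), a representation of $\pi_1(Z)$ is recovered from a compatible pair $(\rho_0,\rho_1)$ by the universal property of the pushout, and the entries of the recovered representation are polynomial in the entries of $\rho_0,\rho_1$; this makes the inverse a polynomial, hence algebraic, map on representation spaces, descending to the quotients.

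The main obstacle I anticipate is the algebraicity of the inverse, and in particular checking that the fiber product $\chi(Z_0)\times_{\chi(\Sigma)}\chi(Z_1)$ carries the correct scheme/semi-algebraic structure so that the inverse is genuinely algebraic rather than merely continuous. The set-theoretic bijectivity is essentially a formal consequence of Lemma \ref{cvsvk} together with the stabilizer coincidence $\Stab(\rho_0) = \Stab(\rho_0|_{\pi_1(\Sigma)})$; the surjectivity hypothesis on $\pi_1(\Sigma) \to \pi_1(Z_0)$ is doing all the essential work there. The delicate point is that while the forward restriction map is visibly algebraic, asserting that the continuous inverse is also algebraic requires that the universal property of the amalgamated product be realized at the level of coordinate rings, which one must verify does not introduce any non-algebraic dependence on the gluing data; here the triviality of the gluing parameters is exactly what removes the potential obstruction.
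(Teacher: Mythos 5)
Your proposal is correct and follows essentially the same route as the paper: apply Lemma \ref{cvsvk}, then use the surjectivity of $\pi_1(\Sigma)\to\pi_1(Z_0)$ to conclude that $\rho_0$ and $\rho_0|_{\pi_1(\Sigma)}$ have equal images and hence equal stabilizers, so each double-coset fiber is a single point. Your extra discussion of why the inverse is algebraic is more careful than the paper, whose proof stops at the set-theoretic bijection and leaves the algebraicity implicit.
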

\begin{proof}
Choose $([\rho_0],[\rho_1])\in\chi(Z_0)\times_{\chi(\Sigma)}\chi(Z_1)$.  Since $\pi_1(\Sigma)\to \pi_1(Y_0)$ is 
surjective, $\rho_0$ and $\rho_0|_{\pi_1(\Sigma)}$ have the same image, and hence equal stabilizers.    Therefore $Stab(\rho_0)\backslash Stab(\rho_0|_\Sigma)/Stab(\rho_1)$ is a single point, and the proof follows from  Lemma \ref{cvsvk}.
\end{proof}
\begin{corollary}\label{handlebody}
 If $Z=Z_0\cup_\Sigma Z_1$ with $Z_0$ a handlebody, then
 $$\chi(Z)=\chi(Z_0)\times_{\chi(\Sigma)}\chi(Z_1).$$
\end{corollary}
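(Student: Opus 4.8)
The plan is to deduce this directly from Proposition \ref{fiber}, whose hypotheses require only that $Z = Z_0 \cup_\Sigma Z_1$ be a decomposition of a compact $3$-manifold along a closed separating surface together with the surjectivity of the inclusion-induced map $\pi_1(\Sigma) \to \pi_1(Z_0)$. In the present setting $\Sigma = \partial Z_0$ is closed and separates $Z$, so the topological part of the hypothesis is automatic. Hence the entire content of the corollary reduces to verifying the single algebraic condition that $\pi_1(\Sigma) \to \pi_1(Z_0)$ is onto.

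First I would recall the standard fact that a genus $g$ handlebody $Z_0$ deformation retracts onto a wedge of $g$ circles, so that $\pi_1(Z_0)$ is the free group $F_g$. The boundary $\Sigma = \partial Z_0$ is a closed orientable surface of genus $g$ with the usual presentation $\pi_1(\Sigma) = \langle a_1,b_1,\dots,a_g,b_g \mid \prod_i [a_i,b_i]\rangle$, and one may choose the $b_i$ to be the meridian curves bounding compressing disks in $Z_0$, so that they are sent to the identity, while the $a_i$ map to a free basis of $F_g$. This exhibits $\pi_1(\Sigma) \to \pi_1(Z_0)$ as surjective. With this in hand, Proposition \ref{fiber} applies verbatim and produces the algebraic isomorphism $\chi(Z) \to \chi(Z_0) \times_{\chi(\Sigma)} \chi(Z_1)$, which is exactly the asserted identification.

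I expect no genuine obstacle here: the surjectivity of $\pi_1(\Sigma) \to \pi_1(Z_0)$ is a classical property of handlebodies (equivalently, the compressing disks kill a Lagrangian half of a symplectic basis while the complementary generators already generate the free fundamental group), and everything else is a direct invocation of Proposition \ref{fiber}. The only point meriting minor care in a fully written-out argument is the bookkeeping of base points and the identification of the surface generators with a free basis of $\pi_1(Z_0)$, but this is routine and does not affect the structure of the proof.
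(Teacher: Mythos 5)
Your proposal is correct and matches the paper's (implicit) argument: the corollary is stated without proof precisely because it is an immediate application of Proposition \ref{fiber}, the only thing to check being the surjectivity of $\pi_1(\Sigma)\to\pi_1(Z_0)$ for a handlebody, which you verify by the standard observation that the meridians bound compressing disks while the complementary generators map onto a free basis of $\pi_1(Z_0)$. Your write-up simply makes explicit the routine fact the authors took for granted.
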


\subsubsection{Composition in the Weinstein category} The Weinstein category
\cite{Weinstein} is, roughly speaking,  a category with objects symplectic manifolds and morphisms Lagrangian immersions.  Composition 
 is not always defined, however.  
The following criterion ensures that a composition of  Lagrangian immersions is defined.

\begin{lemma}\label{wkl}  \cite[§4.1]{MR657442}, \cite[lemma~2.0.5]{BW} Suppose that $M,N$ are symplectic manifolds, $$\alpha:A\to M \text{ and }\beta=\beta_M\times \beta_N:B\to M^-\times N$$ are Lagrangian immersions (with $M^-$ obtained from $M$ by reversing the sign of the symplectic form).
If $\alpha\times \beta_M$ is transverse to the diagonal $\Delta_M\in M\times M$, then $A\times_M B$ is a smooth manifold and $\beta^\alpha_N: A\times_M B\to N$ is a Lagrangian immersion.
\end{lemma}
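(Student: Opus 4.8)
The plan is to reduce the statement to a pointwise fact in symplectic linear algebra, and then to recognize that fact as an instance of the symplectic reduction of a Lagrangian subspace recorded in Section 2.1. Write $\dim M = 2m$ and $\dim N = 2n$, so that $\dim A = m$ and $\dim B = m+n$. Since $A\times_M B = (\alpha\times\beta_M)^{-1}(\Delta_M)$ and $\Delta_M\subset M\times M$ has codimension $2m$, the transversality hypothesis together with the preimage theorem shows immediately that $A\times_M B$ is a smooth manifold of dimension $m+(m+n)-2m = n$, which is already the expected dimension of a Lagrangian in $N$. All remaining content is that $\beta^\alpha_N$ is an immersion whose differential has Lagrangian image, and this is checked fiberwise.

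Fix $(a,b)\in A\times_M B$ and set $p=\alpha(a)=\beta_M(b)$, $q=\beta_N(b)$, $V=T_pM$, $W=T_qN$. The immersions furnish a Lagrangian $\Lambda_A = d\alpha(T_aA)\subset V$ and a Lagrangian $\Lambda = d\beta(T_bB)\subset V^-\oplus W$. I would first record that $T_{(a,b)}(A\times_M B)=\{(u,u'): d\alpha(u)=d\beta_M(u')\}$ and that $d\beta^\alpha_N(u,u')=d\beta_N(u')$, and then check (both inclusions are elementary and use no transversality) that the image of this differential is exactly the composed relation
\[
N := \{\, w\in W : (v,w)\in\Lambda \text{ for some } v\in\Lambda_A \,\}.
\]
The key observation is that $N$ is precisely the symplectic reduction, in the sense of Section 2.1, of the Lagrangian $\Lambda\subset V^-\oplus W$ with respect to the coisotropic subspace $C:=\Lambda_A\oplus W$. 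Indeed one checks that the annihilator $C^\perp$ equals $\Lambda_A\oplus 0\subset C$, so $\widehat C = C/C^\perp\cong (W,\omega_N)$, and under this identification the reduction $(\Lambda\cap C)/(\Lambda\cap C^\perp)$ maps exactly onto $N$. Hence $N$ is a Lagrangian subspace of $W$ by the reduction statement quoted in the preliminaries.

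It remains to see that $d\beta^\alpha_N$ is injective, and this is where the transversality hypothesis enters and is the crux of the argument. Because $\alpha,\beta$ are immersions, the kernel is identified with $\Lambda_A\cap\Lambda_0$, where $\Lambda_0=\{v\in V:(v,0)\in\Lambda\}$ (equivalently $\Lambda\cap C^\perp$). Since $\Lambda$ is Lagrangian one has $\Lambda_0\subseteq(\mathrm{pr}_V\Lambda)^{\perp}$ (symplectic orthogonal in $(V,\omega_M)$), while transversality of $\alpha\times\beta_M$ to $\Delta_M$ translates at the linear level into exactly $\Lambda_A+\mathrm{pr}_V\Lambda=V$. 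For $v\in\Lambda_A\cap\Lambda_0$ one then gets $v\perp\Lambda_A$ (as $\Lambda_A$ is Lagrangian) and $v\perp\mathrm{pr}_V\Lambda$, hence $v\perp(\Lambda_A+\mathrm{pr}_V\Lambda)=V$, so $v=0$ by nondegeneracy of $\omega_M$. Thus $\beta^\alpha_N$ is an immersion of the $n$-manifold $A\times_M B$ whose differential at every point has Lagrangian image $N$, i.e.\ a Lagrangian immersion. The main obstacle is this last linear-algebra step: correctly translating the geometric transversality hypothesis into the vanishing of $\Lambda_A\cap\Lambda_0$, which is exactly what upgrades the automatic statement ``the image is Lagrangian'' to ``$\beta^\alpha_N$ is a Lagrangian immersion.''
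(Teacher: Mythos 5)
Your proposal is correct, and it is worth noting at the outset that the paper itself offers no proof of Lemma \ref{wkl} to compare against: the lemma is stated as a citation to Guillemin--Sternberg and to Bottman--Wehrheim. Your argument is essentially the standard one from those sources, and it has the virtue of being phrased entirely in terms of the paper's own Section 2.1 toolkit: the identification of the image of $d\beta^\alpha_N$ with the composed relation, and the recognition of that relation as the linear symplectic reduction of the Lagrangian $\Lambda = d\beta(T_bB)$ with respect to the coisotropic $C=\Lambda_A\oplus W$, is exactly the mechanism the paper uses elsewhere (Corollary \ref{reduction}, Corollary \ref{thepoint}). All the individual steps check out: the computation ${\rm Annihilator}(C)=\Lambda_A\oplus 0$, the identification $\widehat C\cong (W,\omega_N)$ (the $-\omega_M$ part dies on $\Lambda_A$), the identification of the image of $\Lambda\cap C$ in $\widehat C$ with the composed relation, the translation of transversality into $\Lambda_A+\mathrm{pr}_V\Lambda=V$, and the identification of $\ker d\beta^\alpha_N$ with $\Lambda_A\cap\Lambda_0$ (which uses that both $\alpha$ and $\beta$ are immersions).

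Two minor remarks. First, your final paragraph is logically redundant: since the linear reduction step shows the image of $d\beta^\alpha_N$ is Lagrangian in $W$, hence of dimension $n$, while transversality already gives $\dim(A\times_M B)=n$, injectivity follows from rank--nullity with no further work. Your direct vanishing argument for $\Lambda_A\cap\Lambda_0$ is nevertheless correct and arguably more illuminating, since it shows where transversality enters at the linear level rather than hiding it in the dimension count; but you should not present it as ``the crux'' when it is in fact a consequence of what you have already proved. Second, you have a notational clash: $N$ denotes both the target symplectic manifold and the composed linear relation inside $W=T_qN$; rename the latter (say $\Lambda_N$ or $\Lambda\circ\Lambda_A$) before this is incorporated anywhere.
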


When the transversality assumption in Lemma \ref{wkl} holds, one says {\em the composition $$(A\times_M B,\beta^\alpha_N)$$ of $(A,\alpha)$ and $(B,\beta)$  is defined and immersed}.

 \medskip

 In cases where the transversality assumption in Lemma \ref{wkl} does not hold, a differential topological approach to remedying the situation would be to deform either or both of the immersions $\alpha, \beta$.
 In order to retain the symplectic properties, one would typically deform them by Hamiltonian flows.  In the context in this article, where character varieties correspond to flat moduli spaces in the gauge theoretic framework, we seek deformations in Lemma \ref{wkl}   that correspond to {\em holonomy perturbations} in the gauge theory context;  we describe these  in the next section.

 \subsection{SU(2) and holonomy perturbations}\label{su2ahp}
We return to the pair $(Y,C)$, determining $S=V\cup F$ as above, so $S=\partial Y$ and $V=nbd(C)$. 

 Suppose that $e:D^2\times S^1\hookrightarrow Int(Y)$ is an embedding of  a  solid torus.   Denote its image by $Y_0$,  the closure of the complement by $Y_1$,  and the separating torus by $T$, so that
 $Y=Y_0\cup_T Y_1.$  Corollary \ref{handlebody} shows that 
 $$\chi(Y,C)=\chi(Y_0)\times_{\chi(T)}\chi(Y_1,C),$$ 
 so that $\chi(Y,C)\to \chi(F,\partial F)$ is exhibited as the composition of 
 $$\alpha: \chi(Y_0)\to \chi(T) \text{  and }
 \beta: \chi(Y_1,C) \to \chi(T)\times \chi(F,\partial F).$$

For a deformation $\alpha_\pi:\chi(Y_0)\to \chi(T)$ of $\alpha$,  or more generally a family of functions $\alpha_\pi, ~\pi \in \mathcal{U}$ with  $ \mathcal{U}$   a manifold (for example, a small open interval), we can view the deformed composition 
\begin{equation}\label{pertCV} \chi_{\pi}(Y,C):=\chi(Y_0)\circ_\pi \chi(Y_1,C), ~\beta^{\alpha_\pi }:\chi(Y_0)\circ_\pi  \chi(Y_1,C)\to \chi(F,\partial F)\end{equation} 
as a   {\em perturbed character variety with perturbation data  $\pi$}.

Restrict to the case when $G=SU(2)$ and $J$ is the conjugacy class of imaginary unit quaternions.  Then $\chi(Y_0)$ is simply an arc $[e^{is}],0\leq s \leq \pi$, and the immersion to $\chi(\partial Y_0)$ is $\alpha: [e^{is}]\mapsto [e^{is},1]$ in the pillowcase, described in  (\ref{eq4.5})   and (\ref{eq4.6}).  This map descends from the  smooth, $\ZZ_2$ equivariant map $e^{is} \to (e^{is},1)$ from $S^1 $ to $T$.    

Let $f:\RR\to \RR$ be a smooth, odd, $2\pi$ periodic function.  Then $f$ determines a $\ZZ_2$ equivariant Hamiltonian deformation $e^{is}\mapsto (e^{is},e^{if(s)})$, inducing the deformation  of $\alpha$ given by 
 \begin{equation}\label{perts}
  \alpha_f:[0,\pi]\to\chi(S^1\times S^1),  ~ \alpha_f(s)=[e^{\bbi s},e^{\bbi f(s)}].
  \end{equation}

\medskip 

The definition extends easily to the setting of   a finite disjoint collection of embeddings of solid tori $e=\{e_i\}_{i=1}^k$ of disjointly embedded solid tori in $Y$ and a corresponding collection of smooth, odd, periodic functions $f_1, \dots, f_k$ as above.  Denote by $\pi$ this set of perturbation data $\{(e_i, f_i)\}_{i=1}^k$.  
 Letting $Y_0$ denote the union of the solid tori, Equation (\ref{pertCV}) defines a way to deform the character variety.  
 
Notice that it suffices to think of $e$ as framed link in $Y$, since isotopic embeddings yield equal  perturbed character varieties.  
 
\begin{definition} \label{top desc hol pert} 
Let $\mathcal{V}$ be the  vector space of smooth, odd, $2\pi$ periodic functions $\RR\to \RR$.  Fix $(Y,C)$ as above, where  $C$ may or may not be nonempty).
Given perturbation data $$\pi=(e,f)=(\sqcup_{i=1}^ke_i:D^2\times S^1\subset Y, f=(f_i)\in \mathcal{V}^k),$$
define $\chi_\pi(Y,C)$ to be the resulting perturbed (traceless) character variety.
\end{definition}

In light of  Equation  (\ref{perts}), $\chi_\pi(Y,C)$ has the following explicit description.

\begin{proposition} Let $(Y,C)$ be a compact oriented 3-manifold with a collection of curves $C$ in its boundary.   Let $\pi=(e,f)$ be a choice of perturbation data, and define $Y_0\subset Y$ to  be the disjoint union of  solid tori  $\sqcup_i e_i(D^2\times S^1)$.  Finally define $\lambda_i,\mu_i\in \pi_1(Y\setminus Y_0)$  to be the loops $e_i(S^1\times \{1\})$ and  $e_i(  \{1\}\times \partial D^2)$, connected to the base point in some way.

Then 
$$
 \chi_\pi(Y,C)=\{ \rho\in \chi(Y\setminus Y_0,C)~|~ \text{ if $\rho(\lambda_i)= e^{\bbi s}$, then $\rho(\mu_i)=e^{\bbi f(s)}$} \}.
$$
 
\end{proposition}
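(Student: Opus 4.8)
The plan is to unwind the definition of the perturbed composition and to show that projection to the second factor identifies it with the asserted subset of $\chi(Y\setminus Y_0,C)$. By Equations (\ref{pertCV}) and (\ref{eq4.1}), writing $Y_1$ for the closure of $Y\setminus Y_0$ and $T=\partial Y_0=\sqcup_i T_i$, the space $\chi_\pi(Y,C)$ is by definition the fibre product
\[
\chi_\pi(Y,C)=\chi(Y_0)\times_{\chi(T),\,\alpha_f}\chi(Y_1,C)=\{\,([\rho_0],[\rho_1])~|~\alpha_f([\rho_0])=\beta_M([\rho_1])\,\},
\]
where $\beta_M:\chi(Y_1,C)\to\chi(T)$ is restriction to the separating tori and $\alpha_f$ is the perturbed arc of Equation (\ref{perts}). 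Since $Y_1$ has the homotopy type of $Y\setminus Y_0$ we have $\chi(Y_1,C)=\chi(Y\setminus Y_0,C)$; moreover $\chi(Y_0)=\prod_i\chi(Y_{0,i})$ and the matching condition $\alpha_f([\rho_0])=\beta_M([\rho_1])$ is a conjunction of one condition per torus $T_i$, so it suffices to analyse a single solid torus, and I drop the index $i$.

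Next I would read off the matching condition over a single torus $T=\partial Y_0$. Since $\mu$ and $\lambda$ commute in $\pi_1(T)$, the elements $\rho_1(\lambda),\rho_1(\mu)\in SU(2)$ commute, and commuting elements of $SU(2)$ always lie in a common maximal torus; hence after conjugation we may write $\rho_1(\lambda)=e^{\bbi s}$ and $\rho_1(\mu)=e^{\bbi t}$ simultaneously, so that $\beta_M([\rho_1])$ is the class of the commuting pair $(e^{\bbi s},e^{\bbi t})$. By Equation (\ref{perts}) the image of $\alpha_f$ consists precisely of the classes of pairs $(e^{\bbi s},e^{\bbi f(s)})$ with $0\le s\le\pi$. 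Hence $\alpha_f([\rho_0])=\beta_M([\rho_1])$ holds for some $[\rho_0]\in\chi(Y_0)$ exactly when $e^{\bbi t}=e^{\bbi f(s)}$, that is, when $\rho_1(\mu)=e^{\bbi f(s)}$; this is the defining relation of the asserted subset. Such $[\rho_0]$ is moreover unique, because $\alpha_f$ embeds the arc $\chi(Y_0)=[0,\pi]$: its $\lambda$-holonomy $e^{\bbi s}$, equivalently $\Rea\,e^{\bbi s}=\cos s$, already separates points (cf.\ Section \ref{solid torus}). Therefore the projection $([\rho_0],[\rho_1])\mapsto[\rho_1]$ is injective, with image exactly the set of $[\rho_1]\in\chi(Y\setminus Y_0,C)$ satisfying $\rho_1(\lambda_i)=e^{\bbi s}\Rightarrow\rho_1(\mu_i)=e^{\bbi f_i(s)}$ for every $i$, which is the claimed description.

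The one genuinely delicate point---and the step I expect to be the main obstacle---is verifying that the relation ``$\rho(\lambda_i)=e^{\bbi s}\Rightarrow\rho(\mu_i)=e^{\bbi f_i(s)}$'' is well defined on the character variety, i.e.\ independent of the residual ambiguity in the simultaneous diagonalization. Conjugating the commuting pair by $\bbj$ sends $(e^{\bbi s},e^{\bbi t})$ to $(e^{-\bbi s},e^{-\bbi t})$, so the constraint $t=f(s)$ must be carried to $-t=f(-s)$; these agree precisely because $f$ is odd. This is exactly why the perturbation functions are taken to be odd in Definition \ref{top desc hol pert}: oddness is the condition making $e^{\bbi s}\mapsto(e^{\bbi s},e^{\bbi f(s)})$ the $\ZZ_2$-equivariant map that descends from $S^1$ to the pillowcase (as noted after Equation (\ref{perts})), and equivalently the condition that the holonomy relation descend from $\Hom(\pi_1,SU(2))$ to $\chi$. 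Once well-definedness is in hand, the set equality follows from the two containments above; note that no transversality hypothesis (Lemma \ref{wkl}) is needed here, since the statement concerns only the underlying point sets of the compositions rather than their smooth or Lagrangian structure.
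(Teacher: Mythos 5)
Your proposal is correct and takes essentially the same route as the paper: the paper states this proposition without proof, presenting it as the immediate unwinding of the definition of the perturbed composition in Equation (\ref{pertCV}) together with the perturbed arc of Equation (\ref{perts}), which is exactly the unwinding you carry out. The details you supply---injectivity of the projection onto the $\chi(Y\setminus Y_0,C)$ factor via the embedded arc $\alpha_f$, and the use of oddness and $2\pi$-periodicity of $f$ to make the holonomy condition well defined on conjugacy classes---are precisely the points the paper leaves tacit.
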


\begin{theorem}[\cite{Taubes, Herald}] 
 Given a set of perturbation data $\pi=(e,f)$ as in Definition \ref{top desc hol pert}, there is a holonomy perturbation $h_{(e,f)}$ of the flatness equation on $SU(2)$ connections for which the perturbed flat moduli space is identified with the perturbed character variety as described above.\end{theorem}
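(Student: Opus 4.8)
The plan is to reduce the statement to the explicit holonomy perturbation of Taubes and then to translate its gauge-theoretic conclusion into the representation-theoretic constraint appearing in the preceding proposition. Recall that Taubes' perturbation $h_{(e,f)}$ is supported in the disjoint solid tori $Y_0=\sqcup_i e_i(D^2\times S^1)$: there one adds to the curvature a term built from the holonomy of $A$ around the core circles, cut off by a bump function in the $D^2$ direction and weighted so as to realize the function $f_i$. Writing $Y_1$ for the closure of $Y\setminus Y_0$ and $T_i=\partial e_i(D^2\times S^1)$ for the separating tori, I would first establish two implications: (a) every $h_{(e,f)}$-perturbed flat connection is genuinely flat on $Y_1$ and its boundary holonomies obey the relation of the proposition; and (b) conversely, every flat connection on $Y_1$ whose boundary holonomies obey that relation extends to a perturbed flat connection on $Y$. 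I would then check that this correspondence is equivariant for the gauge group on one side and conjugation on the other, so that it descends to the asserted identification of moduli spaces.

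For (a), since $h_{(e,f)}$ is supported in $Y_0$, the perturbed flatness equation restricted to $Y_1$ is simply $F_A=0$; hence the restriction of a perturbed flat connection to $Y_1$ is flat and, via holonomy, determines a point $\rho\in\chi(Y_1,C)$ (the traceless condition on meridians of the tangle is untouched, as the perturbation is supported away from them). It remains to analyze the equation on a single solid torus. The core computation, and the one analytic input I would import wholesale, is Lemma 8.1 of \cite{Taubes} (equivalently Lemma 16 of \cite{Herald}): after gauge-fixing, a perturbed flat connection on $D^2\times S^1$ is governed by an ODE whose integration over the meridian disk produces exactly the relation that, if the conjugacy class of $\mathrm{hol}_{\lambda_i}(A)$ is represented by $e^{\bbi s}$, then $\mathrm{hol}_{\mu_i}(A)=e^{\bbi f_i(s)}$. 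This is precisely the constraint defining $\chi_\pi(Y,C)$.

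Implication (b) is the extension problem: given $\rho\in\chi(Y_1,C)$ with $\rho(\lambda_i)$ conjugate to $e^{\bbi s}$ and $\rho(\mu_i)=e^{\bbi f_i(s)}$, one builds the perturbed flat connection over each $e_i(D^2\times S^1)$ from the explicit local model underlying Lemma 8.1 of \cite{Taubes} with these prescribed boundary holonomies. Here the oddness and $2\pi$-periodicity of $f_i$ are exactly what make this model compatible with the residual $SU(2)$ conjugation (the Weyl group $\ZZ/2$ acting by $s\mapsto -s$) in the diagonalization $\mathrm{hol}_{\lambda_i}\sim e^{\bbi s}$, so that the extension is well defined and unique up to gauge. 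Since both constructions are natural with respect to gauge transformations and conjugation, passing to equivalence classes identifies the perturbed flat moduli space with $\chi_\pi(Y,C)$.

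Finally I would reconcile this with the Weinstein-composition description of Equation (\ref{pertCV}): the deformed map $\alpha_f$ of Equation (\ref{perts}) is exactly $\chi(Y_0)\to\chi(T_i)$, $[e^{\bbi s}]\mapsto[e^{\bbi s},e^{\bbi f_i(s)}]$, so the fiber product $\chi(Y_0)\circ_\pi\chi(Y_1,C)$ imposes the same boundary relation; invoking Corollary \ref{handlebody}, applied to $Y_0$ a union of solid tori, shows that the two descriptions of $\chi_\pi(Y,C)$ coincide. The genuine obstacle is the solid-torus computation of implication (a): everything else is formal bookkeeping of the gauge and conjugation equivalences, but the precise appearance of $f_i$ in the meridian holonomy rests on Taubes' ODE analysis, which I would cite rather than reprove.
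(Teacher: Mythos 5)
Your proposal is correct and takes essentially the same route as the paper: in fact the paper offers no proof of this theorem at all, attributing it entirely to Taubes (Lemma 8.1) and Herald (Lemma 16), which is precisely the analytic input you propose to import wholesale. Your outline of the two implications, the gauge/conjugation equivariance, and the reconciliation with the Weinstein-composition description of $\chi_\pi(Y,C)$ via Corollary \ref{handlebody} is a faithful expansion of what those citations, together with the paper's own bookkeeping in Section \ref{su2ahp}, are meant to supply.
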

  
 \begin{remark} More flexible   holonomy perturbations can be defined using a solid handlebody, rather than disjoint  solid tori (see, for example, \cite{Flo}). 
 In light of Corollary \ref{handlebody}, there is a similar composition interpretation of the perturbed character variety, with $Y_0$ the   handlebody. But 
 in   this setting,  an   explicit description of the perturbed character  variety   and the counterpart to the restriction map 
 in Equation (\ref{perts}), are not known to the authors.
\end{remark}

\medskip

We can now prove the following theorem, which  is equivalent to   Theorem \ref{thm1}.
\begin{theorem}
  \label{thm1A}
Suppose $A\in \chi_\pi(Y,C)$ is a regular point.   Then $A$ admits a neighborhood $U$ so that 
$r|_U:U\to  \chi(F, \partial F)^*$ is a Lagrangian embedding.
\end{theorem}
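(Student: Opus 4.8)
The plan is to realize the restriction map $r$ as a composition in the Weinstein category and thereby reduce Theorem~\ref{thm1A} to the already-established flat case together with the observation that the $SU(2)$ holonomy perturbation is Hamiltonian. Concretely, let $Y_0=\sqcup_{i=1}^k e_i(D^2\times S^1)\subset Y$ be the disjoint solid tori carrying the perturbation data $\pi=(e,f)$, let $Y_1=\overline{Y\setminus Y_0}$, and let $T=\partial Y_0$ be the corresponding union of tori; set $M=\chi(T)$ and $N=\chi(F,\partial F)$. By Corollary~\ref{handlebody} the unperturbed restriction factors as the Weinstein composition of $\alpha\colon\chi(Y_0)\to M$ with $\beta=\beta_M\times\beta_N\colon\chi(Y_1,C)\to M^-\times N$, and by Equation~(\ref{pertCV}) the perturbed map $r\colon\chi_\pi(Y,C)\to N$ is the analogous composition with $\alpha$ replaced by its deformation $\alpha_\pi$.

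The three ingredients I would assemble are as follows. First, applying the flat result Theorem~\ref{flatthm} (equivalently Corollary~\ref{thepoint}) to the three-manifold $Y_1$ shows that, near a regular point, $\beta$ is a Lagrangian immersion into $M^-\times N$; here the boundary component $T=\sqcup_i T_i$ carries no marked points, so its reduced character variety is simply the product of pillowcases, while the marked meridian conditions survive only on $F$. Second, because $G=SU(2)$, each factor of $\alpha$ is the explicit Lagrangian arc $s\mapsto[e^{\bbi s},1]$ of Equations~(\ref{eq4.5})--(\ref{eq4.6}), and by the discussion in Section~\ref{su2ahp} the perturbation replaces it by $\alpha_f(s)=[e^{\bbi s},e^{\bbi f(s)}]$ of Equation~(\ref{perts}), which is the image of $\alpha$ under the $\ZZ/2$-equivariant shear $(e^{\bbi x},e^{\bbi y})\mapsto(e^{\bbi x},e^{\bbi(y+f(x))})$; since $f$ is odd and $2\pi$-periodic this shear is a Hamiltonian symplectomorphism of $\TT$ (and hence of the pillowcase), so $\alpha_\pi$ is again a Lagrangian immersion. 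Third, with both factors Lagrangian immersions, Lemma~\ref{wkl} yields that the composition $r$ is a Lagrangian immersion precisely when $\alpha_\pi\times\beta_M$ is transverse to the diagonal $\Delta_M$.

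The dimension bookkeeping confirms the expected answer: $\dim\chi(Y_0)=k$, $\dim M=2k$, and at a regular point $\dim\chi(Y_1,C)=k+\tfrac12\dim N$, so the fiber product has dimension $k+(k+\tfrac12\dim N)-2k=\tfrac12\dim N$, exactly half the dimension of the target. The main obstacle, and the step deserving the most care, is verifying that the regularity hypothesis on $A\in\chi_\pi(Y,C)$ is exactly what guarantees the transversality needed to invoke Lemma~\ref{wkl}: one must check that the local constancy of $\dim\widehat H^1(F;\frak{g}_{\ad\rho'})$ and the half-dimension condition defining a regular point translate, through the identification of the formal tangent spaces with the linear symplectic composition of $d\alpha_\pi$ and $d\beta$, into the clean transverse intersection of $\alpha_\pi\times\beta_M$ with $\Delta_M$. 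Once this is in hand the composition is a Lagrangian immersion, and since an immersion is locally an embedding I would shrink the neighborhood $U$ of $A$ to conclude that $r|_U$ is a Lagrangian embedding into $\chi(F,\partial F)^*$, as claimed.
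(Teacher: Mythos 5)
Your proposal is correct and takes essentially the same approach as the paper: the paper's entire proof of Theorem \ref{thm1A} is the single line ``Apply Lemma \ref{wkl}'', relying implicitly on the Section \ref{su2ahp} setup that you spell out explicitly (the factorization of $r$ as the Weinstein composition of $\alpha_\pi$ with $\beta$, the flat case supplying the Lagrangian immersion $\beta$ for $Y_1$, and the Hamiltonian nature of the deformation $\alpha\mapsto\alpha_f$). The transversality-from-regularity point you flag as ``deserving the most care'' is in fact left equally implicit in the paper, so your write-up is, if anything, a more detailed account of the same argument.
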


\begin{proof}
Apply Lemma \ref{wkl}.
\end{proof}

 \subsection{Dependence on perturbations}\label{ssec:depend_pert} 
  In \cite{Herald}, the second author showed that (when $\cL$ is empty) different
  holonomy perturbations in general yield Legendrian cobordant immersions. Unfortunately this usually does not guarantee that they have isomorphic Floer homology. We now outline how one can address this point using Wehrheim and Woodward's quilt theory \cite{WW} (a rigorous formulation of the Weinstein category \cite{MR2662868}),   as well as its extension to the immersed case as developed by Bottman-Wehrheim \cite{BW}.

 \medskip

The reader will find a discussion of holonomy perturbations in cylinders $(S,p)\times[0,1]$, with $p$ a finite set of points in a surface $S$, in \cite{HK}. In particular, Theorem 6.3 of that article states the following. Take  perturbation data $\pi$ with framed perturbation curve obtained by pushing a simple closed curve in $S\setminus p$ into the interior of $S\times I$. Then define  the 1-parameter family of holonomy perturbations $s\pi, s\in [0,\ep)$.

The restriction
$$\chi_{s\pi}((S,p)\times I)\to \chi(S_0,p)\times \chi(S_1,p)$$
 can be identified with the family of graphs of a Hamiltonian isotopy
 of $\chi(S)$  known as the {\em Goldman twist flow} associated to the simple closed curve \cite{Goldman}.   This implies that the holonomy perturbation  process 
 can be viewed as a combination of a decomposition induced by cutting a 3 manifold along a separating torus $T$, followed by a perturbation as in Equation (\ref{pertCV}), with $\alpha_{s\pi}:\chi(S^1\times D^2)\to \chi(T)$ the composition of the unperturbed inclusion $\alpha_0$ followed by a small time flow of the Hamiltonian Goldman twist flow associated to a curve in this torus.

\medskip

 We formalize this in the following way.    
  Call two   Lagrangian immersions $$\iota_0\colon L_0 \looparrowright  M,~\iota_1\colon L_1 \looparrowright  M$$  \emph{secretly Hamiltonian isotopic}  if they can be expressed as compositions with some $\beta:\Lambda \looparrowright  X\times M$:
  \begin{align*}
 i_0: L_0\looparrowright M &=  \beta^{j_0}_M\colon L_0'\times_X\Lambda\looparrowright M, \\
  i_1: L_1\looparrowright M &= \beta^{j_1}_M\colon L_1'\times_X\Lambda\looparrowright M.
    \end{align*}
in such a way that $j_0,j_1:L_0', L_1' \looparrowright X$ are Hamiltonian isotopic in $X$. 
It follows from the discussion in Section~\ref{su2ahp} that different choices of holonomy perturbations induce secretly Hamiltonian isotopic immersed Lagrangians.

Being secretly Hamiltonian isotopic does not necessarily imply being Hamiltonian isotopic. Indeed, $L_0$ and $L_1$ need not even be diffeomorphic. Moreover, in the absence of extra hypotheses (embedded composition, monotonicity, exactness, etc.), the conclusion of the Wehrheim-Woodward composition theorem \cite{WW} need not hold. In particular, given a third Lagrangian $L_2$,  $HF(L_0, L_2)$ and $HF(L_1, L_2)$ need not  be isomorphic, even if they are both well-defined.

However, provided:
\begin{itemize}

\item all Lagrangian immersions and symplectic manifolds satisfy suitable assumptions so to be able to define Lagrangian (quilted) Floer homology,

\item all Lagrangian immersions come equipped with suitable bounding cochains, in a way consistent with composition.

\end{itemize}
Then it would follow from the Bottman-Wehrheim conjecture \cite[Sec.~4.4]{BW} (see also a similar statement in \cite{Fuk_Ymaps}) that the secretly Hamiltonian isotopic Lagrangian immersions $L_0$ and $L_1$, when paired with any test Lagrangian $L_2\subset M$, produce isomorphic Floer homology groups $HF(L_0, L_2)\simeq HF(L_1, L_2)$ (since these would respectively correspond to the quilted Floer homology groups $HF(L_0',  \Lambda, L_2)$ and $HF(L_1',  \Lambda, L_2)$, which are isomorphic).

In that sense  secretly Hamiltonian isotopic Lagrangian immersions $L_0$ and $L_1$ can be thought as being equivalent.  In particular, the problem of  dependence of   Floer theory on the choice of holonomy perturbation
is seen as a special case of the general problem of dependence of quilted Floer homology on composition.

\newcommand*{\arxivPreprint}[1]{ArXiv preprint \href{http://arxiv.org/abs/#1}{#1}}
\newcommand*{\arxiv}[1]{ArXiv:\ \href{http://arxiv.org/abs/#1}{#1}}
\newcommand*{\web}[1]{\ \href{#1}{#1}}
\bibliographystyle{alpha} 
\bibliography{PD}

\end{document}